\definecolor{verydarkblue}{rgb}{0,0,0.5}
\theoremstyle{plain}
\newtheorem{introtheorem}{Theorem}
\crefname{introtheorem}{Theorem}{Theorems}
\crefname{introconjecture}{Conjecture}{Conjectures}
\newtheorem{theorem}{Theorem}[section]
\newtheorem{lemma}[theorem]{Lemma}
\newtheorem{corollary}[theorem]{Corollary}
\newtheorem{conjecture}[theorem]{Conjecture}
\theoremstyle{definition}
\newtheorem{definition}[theorem]{Definition}
\theoremstyle{remark}
\newtheorem{remark}[theorem]{Remark}
\newtheorem{example}[theorem]{Example}
\numberwithin{figure}{section}
\numberwithin{equation}{section}
\def\Q{{\mathbb Q}}
\def\R{{\mathbb R}}
\def\P{{\mathbb P}}
\def\cA{\mathcal{A}}
\def\cF{\mathcal{F}}
\def\cH{\mathcal{H}}
\def\cL{\mathcal{L}}
\def\cN{\mathcal{N}}
\def\O{\mathcal{O}}
\def\a{\alpha}
\def\b{\beta}
\def\d{\delta}
\def\f{\phi}
\def\p{\pi}
\def\s{\sigma}
\def\.{\cdot}
\def\^{\widehat}
\def\~{\widetilde}
\def\o{\circ}
\def\ov{\overline}
\def\rat{\dashrightarrow}
\def\inj{\hookrightarrow}
\def\({\left(}
\def\){\right)}
\def\*{{}^*}
\renewcommand{\and}{ \ \ \text{ and } \ \ }
\def\sm{\mathrm{sm}}
\DeclareMathOperator{\codim} {codim}
\DeclareMathOperator{\NE} {NE}
\DeclareMathOperator{\CNE} {\ov{\NE}}
\DeclareMathOperator{\Pic} {Pic}
\DeclareMathOperator{\Sing} {Sing}
\DeclareMathOperator{\NS} {NS}
\DeclareMathOperator{\Supp} {Supp}
\DeclareMathOperator{\Hom} {Hom}
\DeclareMathOperator{\Locus} {Locus}
\def\bir{\mathrm{bir}}
\DeclareMathOperator{\RC} {RC}
\DeclareMathOperator{\sheafhom}{\mathscr{H}\text{\kern -3pt {\calligra\large om}}\,}
\def\ratquot{/ \hskip-2.5pt /}
\begin{document}

\title{Extending rationally connected fibrations from ample subvarieties}

\dedicatory{In memory of Mauro Beltrametti}

\author{Tommaso de Fernex}

\address{Department of Mathematics, University of Utah, Salt Lake City, UT 48112, USA}

\email{defernex@math.utah.edu}

\author{Chung Ching Lau}

\email{malccad@gmail.com}

\subjclass[2020]{Primary 14D06; Secondary 14J45, 14M22.}
\keywords{Ample subvariety, rationally connected fibration, Mori contraction}

\thanks{%
The research of the first author was partially supported by NSF Grant DMS-1700769
and by NSF Grant DMS-1440140 while in residence at  
MSRI in Berkeley during the Spring 2019 semester.
The research of the second author was partially supported by a Croucher Foundation Fellowship. 
}

\begin{abstract}
Using deformation theory of rational curves, we prove a conjecture
of Sommese on the extendability of morphisms from ample subvarieties
when the morphism is a smooth (or mildly singular) fibration with rationally connected fibers.
We apply this result in the context of Fano fibrations and
prove a classification theorem for projective bundle and quadric fibration
structures on ample subvarieties.
\end{abstract}

\maketitle

\section{Introduction}

This paper is motivated by the following conjecture. 

\begin{conjecture}[Sommese \cite{Som76}]
\label{conj-Som-intro}
Let $X$ be a smooth complex variety and $Y \subset X$ a smooth subvariety of codimension $r$ defined 
by a regular section of an ample vector bundle on $X$. 
Then any morphism $\p \colon Y \to Z$ with $\dim Y - \dim Z > r$
extends to a morphism $\~\p \colon X \to Z$. 
\end{conjecture}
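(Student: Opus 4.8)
The plan is to prove the conjecture in the case relevant here --- when the general fiber of $\varphi$ is rationally connected --- by producing the fibers of the desired extension $\tilde\varphi$ as specializations, inside $X$, of the rational curves that sweep out the fibers of $\varphi$, and then checking that the resulting fibration on $X$ restricts to $\varphi$ along $Y$. After replacing $\varphi$ by its Stein factorization and $Z$ by $\varphi(Y)$, we may assume $\varphi$ is a fibration whose general fiber $F$ is smooth and rationally connected with $\dim F=\dim Y-\dim Z\ge r+1$. Since $Y$ is the zero locus of a regular section of an ample bundle $E$ of rank $r$, the normal bundle $N_{Y/X}\cong E|_Y$ is ample; in particular $E|_\ell\cong\bigoplus_{i=1}^r\cO_{\mathbb P^1}(a_i)$ with all $a_i\ge 1$ for every rational curve $\ell\subset Y$, and every subvariety of $X$ of dimension $\ge r$ meets $Y$.

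\textbf{Step 1: deforming fiber curves into $X$.} As $F$ is rationally connected, pick a minimal free rational curve $f\colon\mathbb P^1\to F\subset Y\subset X$. Since $N_{F/Y}$ is trivial along a general fiber, $f$ is free in $Y$; then the normal sequence $0\to f^*T_Y\to f^*T_X\to f^*E\to 0$, together with $H^1(\mathbb P^1,f^*T_Y(-1))=0$ and the positivity of $f^*E$, shows that $f^*T_X$ is globally generated and $H^1(\mathbb P^1,f^*T_X)=0$. Hence $[f]$ is a smooth point of $\Hom(\mathbb P^1,X)$ on a unique component $\cR$ of dimension $\dim X-K_X\cdot f$, which contains the component $\cR_F$ of $\Hom(\mathbb P^1,F)$ through $[f]$, and the evaluation morphism $\cR\times\mathbb P^1\to X$ is smooth at a general point, hence dominant. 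Thus deformations of $f$ in $X$ cover a dense open subset of $X$, while specializing them back into $Y$ recovers the free rational curves in the fibers of $\varphi$.

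\textbf{Step 2: compatibility with $\varphi$.} For a deformation $C$ of $f$ with $C\not\subset Y$, the intersection $C\cap Y$ is a $0$-cycle of the fixed degree $d=c_r(E)\cdot f$, and as $C$ runs over an irreducible family degenerating to $f(\mathbb P^1)\subset F$ these $0$-cycles trace an irreducible subvariety of $\Sym^d Y$ passing through a cycle supported on $f(\mathbb P^1)$. The crucial point is that the $r$ transverse directions a deformation of $f$ may spend in leaving $Y$ are dominated by the $\ge r$ relative dimensions of $F$ beyond $r$ --- this is exactly where the strict inequality $\dim Y-\dim Z>r$ enters --- from which one deduces that the induced family of $0$-cycles in $\Sym^d Z$ is constant, equal to $d\,[\varphi(F)]$; equivalently, every deformation of $f$ meets $Y$ in a single fiber of $\varphi$. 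Consequently the equivalence relation on $X$ generated by joining points through chains of deformations of such curves $f$ restricts, along $Y$, to the relation ``lying in a common fiber of $\varphi$'': no coarser, by the previous sentence, and no finer, because each fiber $F$ is rationally connected and hence a single $\cR_F$-chain class.

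\textbf{Step 3: quotient and conclusion.} The curves above have anticanonical degree bounded in terms of $\dim F$ and $c_1(E)\cdot f$, so they form a bounded family and the chain equivalence relation they generate is proper; by the quotient theorems of Campana and of Kollár--Miyaoka--Mori it admits a geometric quotient, which one then checks --- using again the ampleness of $Y$ to control the indeterminacy locus --- to be a proper morphism $\tilde\varphi\colon X\to Z'$ with rationally connected general fiber. A dimension count shows each fiber of $\tilde\varphi$ has dimension $\ge r$, hence meets $Y$, and by Step 2 it does so in a single fiber of $\varphi$; therefore $\tilde\varphi(Y)=Z'$, the fibers of $\tilde\varphi|_Y$ are exactly those of $\varphi$, and identifying $Z'\cong Z$ gives $\tilde\varphi|_Y\cong\varphi$, the desired extension. (For the conjecture without the rational connectedness hypothesis, an entirely different mechanism would be needed to move the fibers of $\varphi$ off $Y$; the rational-curve method is special to this case.) I expect Step 2 to be the decisive obstacle: that a deformed fiber curve, upon re-meeting the ample subvariety $Y$, cannot fall into two distinct fibers of $\varphi$ is false in general without the strict inequality $\dim Y-\dim Z>r$, and proving it demands a delicate semicontinuity analysis of the cycles $C\cap Y$ and of their images in $Z$, including control of reducible degenerations via bend-and-break.
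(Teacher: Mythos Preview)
The statement you attempt is a \emph{conjecture}; the paper does not prove it in general. What the paper does prove is a special case, \cref{th:Mori-contr}, which treats rationally connected fibrations under two additional hypotheses you omit: (i) either $\varphi$ is a Mori contraction of pure fiber-type or $\varphi$ contracts no divisors, and (ii) there exists an open $Z^*\subset Z$ with complement of codimension $\ge 2$ over which every fiber is irreducible and carries a very free rational curve in its smooth locus. Your sketch aims at the rationally connected case without these restrictions, which is already more than the paper claims.

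Your outline has two genuine gaps. Step~2 is the crux, and you in effect concede it is unproven: the heuristic that ``the $r$ transverse directions are dominated by the $\ge r+1$ relative dimensions of $F$'' is not an argument, and there is no mechanism given that forces the $0$-cycle $C\cap Y$, for a deformation $C$ of $f$ leaving $Y$, to land in a single fiber of $\varphi$. In Step~3 you invoke the Campana--Koll\'ar--Miyaoka--Mori quotient, but that produces only a \emph{rational} map $\tilde\varphi\colon X\dashrightarrow Z'$; the parenthetical ``which one then checks \dots\ to be a proper morphism'' hides exactly the difficulty the paper is devoted to, and it does not follow from ampleness of $Y$ plus a dimension count.

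The paper's route is different and worth contrasting. It first quotes \cite{BdFL08} to obtain the rational extension $\tilde\varphi\colon X\dashrightarrow Z$ (this is where the $\mathrm{RC}$-quotient machinery enters), then fixes a projective embedding of $Z$ and studies the linear system $|\Lambda|\subset|\cL|$ on $X$ defining $\tilde\varphi$. If the base scheme $B$ were nonempty, one has $\dim B\ge\dim X-\dim Z-1$, so ampleness of $Y$ forces $B\cap Y\ne\emptyset$, and the trace of $|\Lambda|$ on $Y$ reads $|\Lambda_Y|=|\varphi^*\cA|+E$ for an effective divisor $E$. The extra hypotheses (i) are used precisely to show $\varphi(\mathrm{Supp}\,E)$ has codimension one in $Z$, hence meets $Z^*$; then some fiber $F$ over $Z^*$ lies in $E$, and deformations in $X$ of a very free curve on $F$, constrained to send $0$ into $E$, sweep out a locus of dimension $\ge\dim X-1$. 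Since $\cL\cdot C=0$ for every curve $C$ in the relevant numerical class, any such $C$ meeting $B$ lies entirely in $B$, so this locus is contained in $B$ --- contradicting $\mathrm{codim}\,B\ge 2$. Thus the paper never needs your Step~2 claim about $C\cap Y$; it replaces that geometric control by a numerical one ($\cL\cdot C=0$) combined with a base-locus argument, and this is where hypotheses (i) and (ii) are genuinely consumed.
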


The main purpose of \cite{Som76} was to analyze restrictions for 
a projective manifold $Y$ to be an ample divisor in a projective manifold $X$, 
a setting that generalizes more classical studies on hyperplane sections. 
It is in this context that Sommese proved, among other things, 
that the $r=1$ case of \cref{conj-Som-intro} holds, namely, that
if $Y$ is a smooth ample divisor in a smooth complex projective variety $X$,
then any morphism $\p \colon Y \to Z$ with $\dim Y - \dim Z > 1$
extends to a morphism $\~\p \colon X \to Z$.
Letting $Y$ be defined by a regular section 
of an ample vector bundle on $X$ is a natural way of extending the setting to higher codimensions.
This setting is briefly discussed in the second appendix of \cite{Som76}, 
at the end of which that \cref{conj-Som-intro} was stated.

When $Y$ is an ample divisor of $X$, 
the conjecture being already known in this case, people have further investigated 
the boundary case where $\p$ has relative dimension one (the first case beyond the 
bound imposed in the statement), see
\cite{BS95,BI09,Lit17,Liu19} and the references therein. 
By contrast, very little is known about \cref{conj-Som-intro} when $r > 1$.

The setting considered in the conjecture, where $Y$ is a smooth variety defined by a regular section 
of an ample vector bundle on a smooth complex variety $X$, was later revived 
in a series of papers, starting with \cite{LM95}, whose aim was 
to understand how much constraint
the geometry of $Y$ imposes on the geometry of $X$ when $Y$ is assumed to be special from the point of 
view of adjunction theory. 
Some of the results obtained in that period 
deal with situations where $Y$ is equipped with a morphism $\p \colon Y \to Z$, 
most of the times a Mori contraction of some type, see, e.g., \cite{LM96,AO99,dFL99,dF00,LM01,ANO06,Occ06,BdFL08}. 
In some cases such morphism is shown to extend to $X$.
Perhaps the strongest evidence toward Sommese's conjecture 
coming out of these works can be found in \cite{BdFL08}, whose main result can be viewed as a
`rational' solution of the conjecture in the context of rationally connected fibrations
which is there applied to verify the conjecture
for all projective bundles and quadric fibrations of relative Picard number one.

In this paper, we address \cref{conj-Som-intro} when $\p$ is 
a morphism with rationally connected fibers.
In a separate paper, \cite{dFL}, we use different techniques to verify the conjecture 
in some other cases where $X$, $Y$, or $\p$ are built up from toric and abelian varieties.

We consider Sommese's conjecture in the more general context of 
ample subvarieties, a notion due to Ottem \cite{Ott12} that
was not available at the time of the writing of \cite{Som76}
but nonetheless fits very naturally (see \cref{def:ample}
and \cref{conj:Som} below). This is the same setting considered in \cite{dFL}. 
We should stress that the condition of being an ample subvariety is far less restrictive
than being defined by a regular section of an ample vector bundle.

A special case of our main theorem gives the following extension property. 

\begin{introtheorem}[cf.\ \cref{th:Mori-contr}]
\label{th-intro}
Let $X$ be a smooth complex projective variety
and $Y \subset X$ a smooth ample subvariety of codimension $r$.
Let $\p \colon Y \to Z$ be a smooth morphism with rationally connected fibers
such that $\dim Y - \dim Z > r$. Then $\p$
extends uniquely to a morphism $\~\p \colon X \to Z$.
\end{introtheorem}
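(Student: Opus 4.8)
The plan is to reduce the extension problem to a problem about moving rational curves, using the hypothesis that the fibers of $\p$ are rationally connected. First I would fix a sufficiently general point $y \in Y$ and a very free rational curve $C \subset \p^{-1}(\p(y))$ through $y$; since $Y$ is an ample subvariety of $X$, curves in $Y$ deform freely in $X$ (this is the key geometric consequence of ampleness, and it is precisely the input that replaces the regular-section hypothesis in Sommese's original conjecture). The deformation space of $C$ in $X$ has dimension strictly larger than its deformation space inside the fiber, so through a general point of $X$ near $y$ there passes a deformation of $C$, and these deformations sweep out a positive-dimensional family. The idea is to use this family to build, at least birationally, a fibration $\~\p \colon X \rat Z$ extending $\p$: one takes the rationally connected quotient (MRC fibration) of $X$ with respect to the family of deformations of the fibers of $\p$, and checks that it restricts to $\p$ on $Y$.

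The key steps, in order, are: (1) establish the free deformation of fiber-curves from $Y$ into $X$, using $Y$ ample together with standard bend-and-break / deformation estimates; the numerical slack $\dim Y - \dim Z > r$ is exactly what guarantees the deformed curves are not confined to $Y$. (2) Use these curves to construct a covering family on $X$ whose general member is a deformation of a rationally connected fiber of $\p$, and form the associated rational quotient $\~\p \colon X \rat Z'$ with $Z'$ birational to $Z$; here one must check that two points of $Y$ lying in the same fiber of $\p$ get identified in $\~\p$, which follows because they are joined by a chain of these curves. (3) Promote the rational map $\~\p$ to an honest morphism $X \to Z$: this is where one invokes rigidity/extension results for morphisms to $Z$ (e.g. that $\~\p$ contracts no curve that $\p$ doesn't contract, so no curve in the indeterminacy locus can be contracted, forcing the indeterminacy locus to be empty), possibly combined with the ampleness of $Y$ to control the boundary. (4) Uniqueness is immediate once existence is known, since two extensions agreeing on the ample subvariety $Y$ agree on a dense subset hence everywhere, because $Y$ meets every positive-dimensional subvariety of $X$.

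I expect the main obstacle to be step (3): passing from a rational map to a genuine morphism. The birational version — essentially the content of the earlier work cited (the `rational' solution in \cite{BdFL08}) — is comparatively soft, but removing the indeterminacy requires showing that no curve through a point of $X \setminus Y$ is forced into the indeterminacy locus, and that the normalization of the fibers patches correctly over all of $Z$ rather than just over a dense open set. The cleanest route is probably to argue that $\~\p$, being rationally connected with the right fiber dimension and agreeing with a smooth morphism on the ample $Y$, must itself be equidimensional, and then apply a flattening/rigidity argument; controlling what happens over the (a priori bad) locus in $Z$ where fibers could jump is the delicate point, and this is presumably where the hypothesis that $\p$ is \emph{smooth} (or only mildly singular) is used decisively, as the title of \cref{th:Mori-contr} suggests. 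A secondary technical point is ensuring the very free curves in the fibers exist and deform with the expected dimension count even when $Z$ is positive-dimensional, i.e. working fiberwise in a family over $Z$.
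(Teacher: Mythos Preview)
Your architecture for steps (1), (2), and (4) matches the paper: a very free curve in a fiber of $\p$ gives a free curve $f\colon\P^1\to Y$, hence a covering family $W$ on $Y$ whose $\RC_W$-fibration is $\p$; its extension $V=\langle\iota_*(W)\rangle$ to $X$ yields the $\RC_V$-fibration $\~\p\colon X\dashrightarrow Z$, and \cite{BdFL08} together with the Grothendieck--Lefschetz isomorphism $N_1(Y)\cong N_1(X)$ from \cref{th:Pic} (an ingredient you do not mention but which is needed beyond the regular-section setting) shows $\~\p$ restricts to $\p$. You also correctly identify step (3) as the crux and as the paper's new contribution.

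However, your proposed mechanisms for (3) do not work. The parenthetical ``no curve in the indeterminacy locus can be contracted, forcing it to be empty'' is backwards, and ``equidimensional plus flattening/rigidity'' is a property of morphisms, not of rational maps with indeterminacy, so it cannot be invoked before the indeterminacy is resolved. The paper's argument is instead a dimension trap. Embed $Z\subset\P^m$, let $\cL$ on $X$ be the line bundle and $|\Lambda|\subset|\cL|$ the linear system defining $\~\p$, and let $B$ be its base scheme. If $B\neq\emptyset$ then $\dim B\geq\dim X-\dim Z-1$, so by ampleness of $Y$ one has $B\cap Y\neq\emptyset$; commutativity of the diagram then forces $B$ to cut $Y$ in an effective Cartier divisor $E$ with $|\Lambda_Y|=|\p^*\cA|+E$. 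Using the hypothesis on $\p$ (smoothness, or more generally that $\p$ does not contract divisors or is of pure fiber-type) one shows $\p(\Supp E)$ is a divisor in $Z$, hence meets the open set $Z^*$ over which every fiber contains a very free curve in its smooth locus; thus $E$ swallows an entire such fiber $F$. Now deform $g=\iota\circ f$ inside $V'(\{0\}\to E)$, where $V'$ is the component of $V$ through $[g]$: since $E$ is a Cartier divisor in $Y$, the evaluation map $\P^1\times V'(\{0\}\to E)\to X$ has rank $\geq\dim X-1$ near $(q,[g])$, so $\Locus(V';\{0\}\to E)$ has dimension $\geq\dim X-1$. But every curve parametrized by $V'(\{0\}\to E)$ has $\cL$-degree zero and meets $B$, hence lies entirely in $B$. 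This gives $\codim(B,X)\leq 1$, contradicting the fact that the indeterminacy locus of a rational map on a normal variety has codimension $\geq 2$. The idea you are missing is precisely this trap: the base locus, if nonempty, must meet $Y$ in a divisor containing a good fiber, and the curves anchored in that divisor are confined to $B$ yet sweep out something of divisorial size in $X$.
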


The assumption that $\p$ is smooth can be relaxed, and we refer to 
\cref{th:Mori-contr} for a stronger result.  
The proof builds on the main result of \cite{BdFL08} which 
provides us with a rational map $\~\p \colon X \rat Z$ extending $\p$.
The main contribution of the paper is to prove that
this rational map is a well-defined morphism. 
A key ingredient in the proof is a version of Grothendieck--Lefschetz theorem 
for ample subvarieties from \cite{dFL}, which allows us
to go beyond the case where $Y$ is assumed to be defined by a regular section of an ample vector bundle. 
\cref{th-intro} applies, for instance, to smooth Mori contractions, and just like
before the smoothness assumption can be relaxed.

As an application of \cref{th-intro}, 
we verify the conjecture for all fibrations in Fano complete intersections
of index larger than the codimension of complete intersection (see \cref{def:Fano-ci-fibr,th:P-fibr-Q-fibr}). 
In particular, this proves the conjecture (in the more general form stated in \cref{conj:Som})
when $\p$ is a projective bundle or a quadric fibration,
two cases that were previously investigated under more restrictive conditions.

Under an additional condition on the Picard groups
(a condition that is unnecessary if one assumes that $Y$ is defined by a regular section of an ample vector bundle), 
we obtain the following structure theorem.

\begin{introtheorem}[cf.\ \cref{cor:scroll-quadric}]
\label{cor:scroll-quadric-intro}
Let $X$ be a smooth complex projective variety
and $Y \subset X$ a smooth ample subvariety of codimension $r$.  
Assume that the restriction map $\Pic(X) \to \Pic(Y)$ is surjective, 
and let $\p \colon Y \to Z$ be a projective bundle or
a quadric fibration with integral fibers,
such that $\dim Y - \dim Z > r$. 
Then $\p$ extends uniquely to a projective bundle or a
quadric fibration with integral fibers $\p \colon X \to Z$, and the 
fibers of $\~\p$ embed linearly in the fibers of $\~\p$.
\end{introtheorem}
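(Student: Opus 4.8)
The plan is to deduce \cref{cor:scroll-quadric-intro} from \cref{th-intro} (equivalently \cref{th:Mori-contr}) together with the surjectivity hypothesis on $\Pic(X)\to\Pic(Y)$, by first extending the fibration as a morphism and then upgrading it to a projective bundle or quadric fibration of the same type. First I would apply \cref{th-intro}: since a projective bundle or a quadric fibration $\p\colon Y\to Z$ with integral fibers is in particular a smooth (or mildly singular, in the quadric case) morphism with rationally connected fibers, and since $\dim Y-\dim Z>r$, we obtain a morphism $\~\p\colon X\to Z$ extending $\p$. Uniqueness is already part of that statement. The remaining task is purely a matter of identifying the structure of this extension.

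Next I would reconstruct the relevant line bundle on $X$. On $Y$ the fibration $\p$ carries a relative $\cO(1)$: for a projective bundle $Y=\P_Z(\cE)$ this is the tautological bundle, and for a quadric fibration $Y\subset\P_Z(\cE)$ it is the restriction of the tautological bundle; in both cases a suitable twist $L_Y$ restricts to $\cO(1)$ on every fiber $F$ of $\p$. Using surjectivity of $\Pic(X)\to\Pic(Y)$, choose $L$ on $X$ with $L|_Y=L_Y$. Because $Y$ is an ample subvariety and $\dim F=\dim Y-\dim Z>r\ge \codim(Y,X)$, every fiber $F$ of $\p$ meets $Y$ in a subvariety that, inside $\~F:=\~\p^{-1}(\p(F))$, is again an ample subvariety of the same codimension $r$; here one invokes the Grothendieck--Lefschetz theorem for ample subvarieties used in the proof of \cref{th-intro} to compare Picard groups of $\~F$ and $F$, and to transport the positivity of $\cO_F(1)$ to $\cO_{\~F}(1):=L|_{\~F}$. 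One then pushes forward: $\~\cE:=\~\p_*L$ is a vector bundle on $Z$ (the formation of $\~\p_*L$ commutes with base change by cohomology and base change, since on each fiber $\~F$ one has $h^0(\~F,\cO(1))$ constant and higher cohomology vanishing, which follows from the Lefschetz-type comparison with the fiber $F$ of $\p$ and the known cohomology of projective space or a quadric).

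Then I would compare the two fibrations. The line bundle $L$ and the sections in $\~\cE$ give a $Z$-morphism $X\dashrightarrow \P_Z(\~\cE)$; I would show it is an embedding (resp.\ a morphism onto a quadric fibration) by restricting to each fiber $\~F$ and using that $\~F$ is a Fano variety with an ample subvariety $F$ of codimension $r$ which is a projective space or a quadric: the classification of ample subvarieties, or directly the extension statement applied fiberwise (this is exactly the content of \cref{def:Fano-ci-fibr,th:P-fibr-Q-fibr} that the excerpt refers to), forces $\~F$ itself to be $\P^{m}$ or a quadric $Q^{m}$ with $F$ linearly embedded, i.e.\ $F=\P^{m-r}$ linearly in $\~F=\P^{m}$, or $F=Q^{m-r}$ a linear section of $\~F=Q^m$. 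Gluing these fiberwise identifications over $Z$, using that $\~\cE$ restricts fiberwise to the space of linear forms, produces the desired global structure: $X\cong\P_Z(\~\cE)$, or $X$ is a divisor in $\P_Z(\~\cE)$ of relative degree $2$, i.e.\ a quadric fibration with integral fibers, and the fibers of $\p$ embed linearly in those of $\~\p$.

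I expect the main obstacle to be the fiberwise rigidity step: showing that a Fano variety $\~F$ containing a projective space (or smooth quadric) as an ample subvariety of codimension $r$ must itself be a projective space (or quadric) with the subvariety linearly embedded. For $r=1$ this is classical (ample divisors that are $\P^n$ or $Q^n$), but for general $r$ it requires the ample-subvariety Grothendieck--Lefschetz theorem from \cite{dFL} to pin down $\Pic(\~F)$ and the Fano index, then a classification argument (in the spirit of \cite{BdFL08}) to identify $\~F$; this is presumably packaged in \cref{th:P-fibr-Q-fibr}, so the work here is to check its hypotheses fiber by fiber and to verify that the identifications vary algebraically in $Z$, for which the constancy of the cohomology groups $h^i(\~F,L|_{\~F})$ and base change are the key technical inputs. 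A secondary, more routine point is checking that the surjectivity hypothesis $\Pic(X)\to\Pic(Y)$ is genuinely needed only to produce the extension $L$ of $L_Y$ and is automatic when $Y$ is cut out by a regular section of an ample vector bundle, as remarked in the statement.
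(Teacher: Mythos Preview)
Your overall architecture is right---extend $\p$ to a morphism, lift the relative $\cO(1)$ via $\Pic(X)\twoheadrightarrow\Pic(Y)$, then identify the fibers---but the heart of the argument, the fiberwise rigidity, has a real gap. You write that the statement ``a Fano variety $\~F$ containing $\P^{n}$ or $Q^{n}$ as an ample subvariety of codimension $r$ must itself be $\P^{m}$ or $Q^{m}$'' is ``presumably packaged in \cref{th:P-fibr-Q-fibr}.'' It is not: \cref{th:P-fibr-Q-fibr} only produces the extension $\~\p\colon X\to Z$ as a morphism; it says nothing about the structure of its fibers. The paper establishes rigidity by a different route: use adjunction along $F\subset G$ (with $\cN_{F/G}=\cN_{Y/X}|_F$ ample of rank $r$) to bound the nef value of $(G,\cL|_G)$ from below, then invoke Ionescu's classification \cite{Ion86} to force the general fiber to be $(\P^m,\cO(1))$, $(Q^m,\cO(1))$, or a scroll over a curve. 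The scroll case is then excluded by a nontrivial monodromy argument (restrict to a curve in $Z$ and compare monodromy on $N_1(F)$ and $N_1(G)$). Finally, to pass from the general fiber to all fibers the paper uses semi-continuity of Fujita's $\Delta$-genus and his classification of $\Delta$-genus zero---your base-change argument presumes you already know the special fibers are projective spaces or quadrics, which is exactly what must be proved.

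There are also preliminary steps you skip that the paper handles carefully: before you can speak of ``the fiber $\~F$'' as a nice variety, you need $\~\p$ to be equidimensional with integral fibers and flat (the paper proves these from ampleness of $Y$ plus the regularity of $Z$, cf.\ \cref{l:flat-extension}); and you must separately dispose of the case where $\p$ is a $\P^1\times\P^1$-bundle with trivial monodromy (relative Picard number $2$), which the paper does via \cite{Liu19}. Note also that your appeal to Grothendieck--Lefschetz on the fibers requires $F$ smooth, but in the quadric case singular fibers of $\p$ are allowed.
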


This theorem improves upon several earlier results.
When $r=1$, the case where $\p$ is a projective bundle follows
from \cite[Proposition~III]{Som76} and \cite[Theorem~5.5]{BI09}.
When $Z$ is a curve and $\p$ has relative Picard number 1, 
\cref{cor:scroll-quadric-intro} follows from case~(a) of \cite[Theorem~5.8]{BdFL08}.
As we may well assume that $\dim Y \ge 3$, the statement being trivial otherwise, 
the Lefschetz--Sommese theorem 
shows that the hypothesis that $\Pic(X) \to \Pic(Y)$ is surjective is automatic
if $Y$ is assumed to be defined by a regular section of an ample vector bundle on $X$.
In this more restrictive setting, special cases of \cref{cor:scroll-quadric-intro} were first obtained:
in \cite{LM96} when $\p$ is a projective bundle over a curve of positive genus,
and when $Z$ is a curve and there exists {\it a priori} a polarization of $X$
inducing a relatively linear polarization on $\p$;
in \cite[Theorems~4.1 and~5.1]{AO99} 
with no restrictions on $Z$ but still assuming the existence of such a polarization;
and in case~(b) of \cite[Theorem~5.8]{BdFL08} 
for all projective bundles and quadric fibrations of relative Picard number one.

The proof of \cref{th-intro} uses deformation theory of rational curves and relies, 
in particular, on properties of the scheme $\Hom(\P^1,X)$ 
parameterizing maps $\P^1 \to X$. 
It would be interesting to see whether a more delicate 
analysis of deformation theory of 1-cycles, using
Chow varieties in place of the Hom scheme, might lead to
a proof of the conjecture for all extremal Mori contractions and, more generally, 
for all Mori contractions of pure fiber-type
and all rationally connected fibrations not contracting divisors
(see \cref{def:contr-no-div,def:pure-fiber-type}).

\subsection*{Acknowledgements}

The first author is indepted to Mauro Beltrametti (to whom the paper is dedicated)
and Antonio Lanteri for many fruitful conversations they had over several years
on the topic of the paper.

\section{Ample subvarieties and Sommese's extendability conjecture}

We recall the definitions of $q$-ampleness and ample subscheme from \cite{Tot13,Ott12}. 

\begin{definition}
\label{def:ample}
Given a nonnegative integer $q$, a line bundle $\cL$ on a complex projective variety $X$ 
over a field is said to be \emph{$q$-ample} 
if every coherent sheaf $\cF$ on $X$ we have 
$H^i(X,\cF \otimes \cL^{\otimes m}) = 0$ for all $i > q$ and all $m$ sufficiently large
depending on $\cF$.
The same terminology is used for a Cartier divisor $D$ if the condition is satisfied by $\O_X(D)$. 
A closed subscheme $Y$ of codimension $r > 0$ of a complex projective variety $X$ is said to be 
\emph{ample} if the exceptional
divisor $E$ of the blow-up of $X$ along $Y$ is $(r-1)$-ample.
\end{definition}

Examples of ample subschemes are given by schemes defined (scheme theoretically) by regular sections
of ample vector bundles on complex projective varieties \cite[Proposition~4.5]{Ott12}.
The notion of ample subscheme can be thought of as a generalization of this.
Other examples are given by smooth curves with ample normal bundle in projective homogeneous
varieties \cite[Proposition~8.1]{Ott12}, and smooth subvarieties of projective spaces
whose embeddings satisfy the Lefschetz hyperplane with rational coefficients \cite[Theorem~7.1]{Ott12}.
Notice that many of these examples (for instance, all examples where 
the Lefschetz hyperplane does not hold with integral coefficients) 
cannot be realized as zero sets of regular sections of ample vector bundles.

We will use the following extensions of the
Lefschetz hyperplane theorem and the Grothendieck--Lefschetz theorem to ample subvarieties.

\begin{theorem}[\protect{\cite[Corollary~5.3]{Ott12}}]
\label{th:Lefschetz}
Let $X$ be a smooth complex projective variety, 
and let $Y \subset X$ be an ample l.c.i.\ subscheme. Then the restriction map
$H^i(X,\Q)\to H^i(Y,\Q)$ is an isomorphism for $i < \dim Y$ and is injective for $i = \dim Y$.
\end{theorem}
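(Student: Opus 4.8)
The plan is to reduce the statement to a bound on the cohomological dimension of the complement $U := X \setminus Y$. Let $\pi \colon \~{X} = \Bl_Y X \to X$ be the blow-up of $X$ along $Y$, with exceptional divisor $E$. By construction $\O_{\~{X}}(-E) = \cI_Y \cdot \O_{\~{X}}$ is invertible, so $E$ is an effective Cartier divisor, $\pi$ restricts to an isomorphism $\~{X} \setminus E \xrightarrow{\sim} U$, and by hypothesis $\O_{\~{X}}(E)$ is $(r-1)$-ample. The first step is to show that $U$ has coherent cohomological dimension at most $r-1$. Given a coherent sheaf $\cF$ on $U$, extend it to a coherent sheaf $\ov{\cF}$ on $\~{X}$. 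Since $E$ is Cartier, $\varinjlim_m \O_{\~{X}}(mE)$ is the direct image of $\O_U$ under the open inclusion $U \hookrightarrow \~{X}$, its higher direct images vanish (locally $U$ is the complement of a hypersurface in an affine), and cohomology commutes with filtered colimits on the Noetherian scheme $\~{X}$; hence $H^i(U,\cF) \cong \varinjlim_m H^i(\~{X}, \ov{\cF} \otimes \O_{\~{X}}(mE))$. Each term on the right vanishes for $i > r-1$ once $m$ is large (depending on $\ov{\cF}$), by $(r-1)$-ampleness, so $H^i(U,\cF) = 0$ for all $i \ge r$ and all coherent $\cF$ on $U$.

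Because $X$ is smooth, $U$ is a smooth quasi-projective variety of dimension $n := \dim X$, so each $\Omega^p_U$ is coherent and vanishes for $p > n$. Feeding the vanishing just obtained into the hypercohomology spectral sequence
\[
E_1^{p,q} = H^q(U, \Omega^p_U) \;\Longrightarrow\; \mathbb{H}^{p+q}(U, \Omega^\bullet_U) = H^{p+q}_{\mathrm{dR}}(U),
\]
whose nonzero $E_1$-terms all lie in the range $0 \le p \le n$, $0 \le q \le r-1$, gives $H^k_{\mathrm{dR}}(U) = 0$ for $k > n+r-1$. By Grothendieck's algebraic de Rham comparison theorem, $H^k(U,\C) = 0$ and hence $H^k(U,\Q) = 0$ for every $k > n+r-1$. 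Poincaré duality on the oriented $2n$-manifold $U$ then yields $H^k_c(U,\Q) \cong H^{2n-k}(U,\Q)^\vee = 0$ whenever $2n-k > n+r-1$, that is, whenever $k \le n-r = \dim Y$.

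Finally, apply the long exact sequence in compactly supported cohomology associated with the decomposition of $X$ into the closed subset $Y$ and the open subset $U$; since $X$ and $Y$ are proper, this reads
\[
\cdots \to H^k_c(U,\Q) \to H^k(X,\Q) \to H^k(Y,\Q) \to H^{k+1}_c(U,\Q) \to \cdots,
\]
with the middle arrow the restriction map. The vanishing $H^k_c(U,\Q)=0$ for $k \le \dim Y$ established above shows at once that $H^k(X,\Q) \to H^k(Y,\Q)$ is injective for $k = \dim Y$ and bijective for $k < \dim Y$, as claimed. (One may alternatively derive the vanishing of $H^k(U,\Q)$ in high degrees from the $(q+1)$-convexity of $U$ and Andreotti--Frankel type Morse theory; note also that the local complete intersection hypothesis is used here only to make $\dim Y$ unambiguous, the argument applying to any ample closed subscheme of pure codimension $r$.)

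The step that requires the most care is the cohomological-dimension bound for $U$: one must be scrupulous about the identity $H^i(U,\cF) \cong \varinjlim_m H^i(\~{X}, \ov{\cF}(mE))$ and about the fact that $(r-1)$-ampleness of $E$ is precisely what the definition of an ample subscheme supplies, as this is the only point at which the hypothesis enters. Everything downstream is essentially formal — the de Rham spectral sequence together with Grothendieck's theorem, then the Poincaré-duality count — the only subtlety being to track the numerical ranges so that the bound $k \le \dim Y$, and hence the conclusion, comes out sharp.
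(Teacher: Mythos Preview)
The paper does not supply a proof of this statement; it is quoted verbatim from \cite[Corollary~5.3]{Ott12} and used as a black box. Your argument is correct and is, in fact, essentially Ottem's own proof: bound the coherent cohomological dimension of $U = X \setminus Y$ by $r-1$ via the $(r-1)$-ampleness of $E$ on the blow-up, feed this into the Hodge-to-de Rham spectral sequence together with Grothendieck's comparison theorem to kill $H^k(U,\Q)$ for $k > \dim X + r - 1$, dualize with Poincar\'e duality, and conclude with the long exact sequence in compactly supported cohomology for the pair $(X,Y)$. The only remark worth making is that your parenthetical aside slightly undersells the l.c.i.\ hypothesis: beyond fixing $\dim Y$, it guarantees that $Y$ has pure codimension $r$ so that the numerics $n - r = \dim Y$ hold on the nose, which is exactly what is needed for the sharp range in the conclusion.
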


\begin{theorem}[\protect{\cite[Theorem~A]{dFL}}]
\label{th:Pic}
Let $X$ be a smooth complex projective variety
and $Y \subset X$ be a smooth ample subvariety.
Then the restriction map $\Pic(X) \to \Pic(Y)$ is injective if $\dim Y \ge 2$
and has finite cokernel if $\dim Y \ge 3$.
\end{theorem}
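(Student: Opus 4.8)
The plan is to reduce everything to a Grothendieck--Lefschetz-type comparison between $X$ and its formal completion $\widehat{X}_Y$ along $Y$, using that the ampleness of $Y$ forces the complement $X\setminus Y$ to have small cohomological dimension. As a preliminary step I would extract from \cref{def:ample} the two consequences of ampleness that the argument really needs. Writing $r=\codim(Y,X)$ and letting $E$ be the exceptional divisor of $\pi\colon\Bl_Y X\to X$, the line bundle $\O(E)$ is $(r-1)$-ample; since for any effective Cartier divisor $D$ with $\O(D)$ being $q$-ample and any coherent sheaf $\cF$ one has $H^i(X\setminus D,\cF)=\varinjlim_m H^i(X,\cF(mD))=0$ for $i>q$, the complement of such a $D$ has cohomological dimension $\le q$, and as $\Bl_Y X\setminus E\cong X\setminus Y$ we get $\cd(X\setminus Y)\le r-1$. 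Secondly, $\O_E(1)=\O(-E)|_E$ is ample on $E$, equivalently the normal bundle $N=N_{Y/X}$ is ample on $Y$ (both statements are in \cite{Ott12}). These replace, in our more general setting, the Koszul resolution available when $Y$ is the zero locus of a regular section of an ample vector bundle.

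For the injectivity, assume $\dim Y\ge 2$ and factor the restriction map as $\Pic(X)\to\Pic(\widehat{X}_Y)\to\Pic(Y)$. Since $Y$ is l.c.i.\ of codimension $r$ in the smooth variety $X$, one has $H^i_Y(X,\cF)=0$ for $i<r$ for every locally free $\cF$; combined with $\cd(X\setminus Y)\le r-1$, the comparison theorem for formal completions gives that $H^i(X,\cF)\to H^i(\widehat{X}_Y,\widehat{\cF})$ is bijective for $i<\dim Y$ and injective for $i=\dim Y$, and applying this to $\O_X$ in degrees $0$ and $1$ shows $\Pic(X)\to\Pic(\widehat{X}_Y)$ is injective. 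For the second arrow, one filters by the infinitesimal neighbourhoods $Y^{(n)}$, where $\cI_Y^n/\cI_Y^{n+1}\cong S^nN^\vee$ because $Y$ is smooth; the truncated exponential sequences $0\to S^nN^\vee\to\O^*_{Y^{(n)}}\to\O^*_{Y^{(n-1)}}\to 1$ show that the kernel and cokernel of $\Pic(\widehat{X}_Y)\to\Pic(Y)$ are governed by $H^0(Y,S^nN^\vee)$ and $H^i(Y,S^nN^\vee)$, $i=1,2$. Now $H^0(Y,S^nN^\vee)=0$ for $n\ge 1$ since $N^\vee$ is the dual of an ample bundle, and $H^1(Y,S^nN^\vee)\cong H^{\dim Y-1}(Y,\omega_Y\otimes S^nN)^\vee$ vanishes for $n\gg 0$ by Serre vanishing on $\P(N)$ (using $\dim Y-1\ge 1$), and in fact for all $n\ge 1$ by a Griffiths-type vanishing argument that uses the full strength of the ampleness of $Y$; hence the second arrow is injective too, and composing gives the injectivity of $\Pic(X)\to\Pic(Y)$.

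For the finiteness of the cokernel, assume $\dim Y\ge 3$, so that by \cref{th:Lefschetz} the restriction maps $H^1(X,\Q)\to H^1(Y,\Q)$ and $H^2(X,\Q)\to H^2(Y,\Q)$ are isomorphisms; as restriction maps between smooth projective varieties they are isomorphisms of Hodge structures. From the degree-$1$ statement, $H^1(X,\O_X)\to H^1(Y,\O_Y)$ is an isomorphism while $H^1(X,\Z)\to H^1(Y,\Z)$ has finite cokernel, so $\Pic^0(X)\to\Pic^0(Y)$ is surjective. From the degree-$2$ statement, a class in $H^2(X,\Q)$ whose restriction to $Y$ is of type $(1,1)$ is already of type $(1,1)$ on $X$ (the isomorphism preserves the Hodge decomposition), hence algebraic by the Lefschetz $(1,1)$-theorem; therefore for every $L\in\Pic(Y)$ some multiple of $c_1(L)$ lifts to $\NS(X)_\Q$, and after a further bounded multiple and a correction by a class in $\Pic^0(X)$ a power of $L$ lies in the image of $\Pic(X)$. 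Feeding these facts into the snake lemma for the exact sequences $0\to\Pic^0\to\Pic\to\NS\to 0$ of $X$ and $Y$, together with the finite generation of $\NS(Y)$, shows that $\coker(\Pic(X)\to\Pic(Y))$ is finite.

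The hard part is the injectivity, and within it the control of the infinitesimal neighbourhoods: one must establish the vanishing of $H^1(Y,S^nN^\vee)$ for \emph{all} $n\ge 1$ (not only for $n\gg 0$), which is precisely where the positivity of the pair $(X,Y)$ must be used beyond the ampleness of $N$ alone; if one only knows the vanishing for $n\gg 0$, the residual obstructions have to be absorbed by a separate argument comparing $\Pic^0(X)$ and $\Pic^0(Y)$, i.e.\ by topological input in degree one. The same phenomenon explains why one can only expect \emph{finiteness} of the cokernel rather than an isomorphism: $\Pic(\widehat{X}_Y)\to\Pic(Y)$ need not be surjective because the first-order lifting obstructions in $H^2(Y,S^nN^\vee)$ may be nonzero, and this is repaired only up to powers and up to $\Pic^0$, exactly as in the Hodge-theoretic argument above. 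Surjectivity (hence an isomorphism) is restored precisely when $Y$ is cut out by a regular section of an ample vector bundle. Finally, the two thresholds $\dim Y\ge 2$ and $\dim Y\ge 3$ are the same ones that appear in the classical Grothendieck--Lefschetz theorem for ample divisors.
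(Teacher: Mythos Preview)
The present paper does not prove this theorem: it is quoted without proof from \cite{dFL}, so there is no argument here to compare your proposal against. Your outline---formal Grothendieck--Lefschetz for the map $\Pic(X)\to\Pic(\widehat{X}_Y)$, infinitesimal analysis via $S^nN^\vee$ for the map $\Pic(\widehat{X}_Y)\to\Pic(Y)$, and Hodge theory combined with the rational Lefschetz isomorphism of \cref{th:Lefschetz} for the cokernel---is the natural strategy and is presumably close in spirit to what \cite{dFL} does.

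Two steps in your sketch need more, however. The sentence ``applying this to $\O_X$ in degrees $0$ and $1$ shows $\Pic(X)\to\Pic(\widehat{X}_Y)$ is injective'' is a non sequitur: coherent cohomology of $\O_X$ does not by itself control $H^1(\O_X^*)$, and one must instead invoke an SGA2-type Lefschetz condition $\Lef(X,Y)$, which is where the bound $\cd(X\setminus Y)\le r-1$ together with the local depth along $Y$ actually enters. More substantially, the vanishing $H^1(Y,S^nN^\vee)=0$ for \emph{all} $n\ge 1$ is, as you yourself concede, the heart of the matter, and invoking ``a Griffiths-type vanishing argument that uses the full strength of the ampleness of $Y$'' is not a proof: Griffiths vanishing requires a $\det N$ twist and does not yield $H^{\dim Y-1}(Y,\omega_Y\otimes S^nN)=0$ as stated. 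Your proposed fallback---absorbing the finitely many residual $H^1$'s into a comparison of $\Pic^0$ via degree-one topology---only shows that the kernel of $\Pic(X)\to\Pic(Y)$ is torsion, because \cref{th:Lefschetz} is a $\Q$-statement; killing that torsion and obtaining honest injectivity requires additional input, for instance a $\pi_1$-type connectivity result for ample subvarieties. Your treatment of the finite cokernel for $\dim Y\ge 3$, on the other hand, is correct.
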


We now come to \cref{conj-Som-intro}. 
As we discussed, subschemes defined by regular sections of ample vector bundles are 
ample in the ambient variety, and we consider the following reformulation of 
Sommese's conjecture in the context of ample subvarieties.

\begin{conjecture}
\label{conj:Som}
Let $X$ be a smooth complex projective variety 
and $Y \subset X$ a smooth ample subvariety of codimension $r$.
Then any morphism $\p \colon Y \to Z$ with $\dim Y - \dim Z > r$
extends uniquely to a morphism $\~\p \colon X \to Z$. 
\end{conjecture}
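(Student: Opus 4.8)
The conjecture is open in general; I will outline the approach for the range accessible by present methods, where $\p$ is a (possibly mildly singular) fibration whose general fibre is rationally connected, which already yields \cref{th-intro}. After replacing $Z$ by $\ov{\p(Y)}$ and Stein factorizing, we may assume $\p$ is surjective with connected fibres, and by \cref{th:Lefschetz} we may assume $\dim Y\ge 3$. Uniqueness is then immediate from \cref{th:Pic}: for a very ample $L$ on $Z$ the pullbacks $\~\p_{1}^{*}L$ and $\~\p_{2}^{*}L$ of two extensions agree on $Y$, hence agree on $X$ by injectivity of $\Pic(X)\to\Pic(Y)$, and the tuples of sections defining the two maps differ by elements of $H^{0}(X,\~\p^{*}L)=H^{0}(Z,L)$ vanishing on $Y$, hence vanishing since $\p$ is surjective; so the two extensions coincide. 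The content is therefore existence.

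The first step is to invoke the main theorem of \cite{BdFL08}, which produces a rational map $\~\p\colon X\rat Z$ that is regular on an open set $U\supseteq Y$, restricts to $\p$ there, and whose general fibre is rationally connected; since $X$ is smooth, $B:=X\setminus U$ has codimension at least $2$. Let $\Gamma_{0}\subset X\times Z$ be the closure of the graph of $\~\p|_{U}$, let $\Gamma\to\Gamma_{0}$ be its normalization, and write $p\colon\Gamma\to X$ and $q\colon\Gamma\to Z$ for the two projections; here $p$ is proper and birational and an isomorphism over $U$. Since $X$ is normal, a proper birational morphism onto it with finite fibres is an isomorphism, so by Zariski's main theorem it suffices to show that every fibre of $p$ is finite, for then $\~\p=q\circ p^{-1}$ is a morphism. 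The key elementary point is that $p^{-1}(x)$ maps finitely onto $\Gamma_{0}\cap(\{x\}\times Z)$, which the second projection identifies with a closed subscheme of $Z$; hence $p^{-1}(x)$ is finite if and only if $q\bigl(p^{-1}(x)\bigr)$ is finite, and it is enough to prove that $q\bigl(p^{-1}(x)\bigr)$ is a single point for every $x\in X$ (automatic for $x\in U$).

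The heart of the argument takes place on $\Hom(\P^{1},X)$. A general fibre $F$ of $\p$ is smooth and rationally connected of dimension $\dim Y-\dim Z>r$, so it contains free rational curves; using the sequence $0\to N_{F/Y}|_{\P^{1}}\to N_{F/X}|_{\P^{1}}\to N_{Y/X}|_{\P^{1}}\to 0$, the triviality of $N_{F/Y}$ along $F$ (smoothness of $\p$), and the positivity of $N_{Y/X}$ along suitable such curves forced by ampleness of $Y$, one produces a free rational curve $f\colon\P^{1}\to Y$ contained in $F$ that is still free in $X$. Let $V\subseteq\Hom(\P^{1},X)$ be the component through $[f]$. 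Two things must be extracted from $V$. First, every member $C$ of $V$ meeting $U$ is contracted by $\~\p$: the numerical class $q_{*}[\widetilde C]\in N_{1}(Z)$ of its proper transform is constant along the irreducible family $V$ and vanishes for $C\subseteq F$, hence it vanishes for all $C$, so each $\widetilde C\subset\Gamma$ maps to a point of $Z$. Second — and this is where one genuinely uses that $Y$ is an ample \emph{subvariety} rather than the zero scheme of an ample vector bundle — the curves parameterized by $V$ sweep out all of $X$, not merely $Y$; here one passes to $\Bl_{Y}X$, whose exceptional divisor is $(r-1)$-ample, to propagate deformations of $f$ off of $Y$ and over the whole of $X$.

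Granting these, fix $x\in B$. Since the curves in $V$ cover $X$ and $\codim_{X}B\ge 2$, a general $C\in V$ with $x\in C$ meets $U$ and is not contained in $B$, so its proper transform $\widetilde C\subset\Gamma$ satisfies $q(\widetilde C)=\{\mathrm{pt}\}$ and meets $p^{-1}(x)$; as $C$ varies, $q\bigl(p^{-1}(x)\bigr)$ is covered by the resulting points. One then concludes by identifying $\~\p$, up to the standard birational modifications, with the rationally connected fibration attached to the covering family $V$: this is the stage at which \cref{th:Pic} enters, to transport the relative Néron--Severi data over $Z$ between $Y$ and $X$ and thereby exclude that the $V$-chains through $x$ spread over a positive-dimensional subset of $Z$ — and, crucially, only the finiteness of the cokernel of $\Pic(X)\to\Pic(Y)$ is available, so one must verify that finiteness still suffices to pin down the numerical class of the contracted curves. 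It follows that $q\bigl(p^{-1}(x)\bigr)$ is a single point for every $x$, whence $p$ is finite, hence an isomorphism, and $\~\p\colon X\to Z$ is a morphism. I expect the main obstacle to be exactly this interface between the deformation theory of rational curves and the ampleness hypothesis: controlling the behaviour of the curves in $V$ as they approach the indeterminacy locus $B$, and showing that the resulting covering family is ``connecting enough'' over $Z$ — it is precisely here that replacing ``regular section of an ample bundle'' by ``ample subvariety'' costs the automatic Lefschetz--Sommese comparison of Picard groups and forces one to work with the weaker conclusion of \cref{th:Pic}.
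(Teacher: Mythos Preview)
The statement is a conjecture, and the paper does not prove it in general; you correctly flag this and restrict attention to the rationally connected case covered by \cref{th-intro}/\cref{th:Mori-contr}. So the comparison is between your outline and the paper's proof of \cref{th:Mori-contr}.

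There is a genuine gap in your strategy. After producing the rational extension $\~\p$ and the graph resolution $p\colon\Gamma\to X$, you fix $x\in B$ and consider proper transforms $\widetilde C\subset\Gamma$ of curves $C\in V$ through $x$. You then assert that ``as $C$ varies, $q\bigl(p^{-1}(x)\bigr)$ is covered by the resulting points.'' This is not justified and need not hold: each $\widetilde C$ meets $p^{-1}(x)$ only at the finitely many points of $\widetilde C$ lying over $x$, and nothing prevents all such limits, as $C$ ranges over $V(\{0\}\to x)$, from landing in a proper closed subset of $p^{-1}(x)$. The subsequent appeal to ``identifying $\~\p$ with the $\RC_V$-fibration'' does not help, since that identification is only birational and is precisely what \cite{BdFL08} already gives; it carries no information about the fiber of $p$ over a specific indeterminacy point. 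You seem aware of this, but the difficulty is not a technicality---it is the whole problem.

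The paper's argument bypasses this local analysis entirely and runs a global dimension contradiction. One fixes a projective embedding $Z\subset\P^m$, lets $\cL$ be the line bundle on $X$ whose sections define $\~\p$, and lets $B$ be the base scheme of the defining linear system. Assuming $B\ne\emptyset$, one has $\dim B\ge\dim X-\dim Z-1$, hence by ampleness $B\cap Y\ne\emptyset$; moreover $B$ cuts $Y$ in an effective divisor $E$ with $\cL|_Y\cong\p^*\cA\otimes\O_Y(E)$. The structural hypotheses on $\p$ (pure fiber-type, or no contracted divisors) force $\p(\Supp(E))$ to have codimension one in $Z$, so $E$ contains an entire fiber $F$ over $Z^*$, and $F$ carries a very free curve $h$. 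The composed curve $g=\iota\circ h$ is free in $X$; letting $V'$ be its component in $\Hom_\bir(\P^1,X)$, a rank computation shows $\Locus(V';\{0\}\to E)$ has dimension $\ge\dim X-1$. Finally, since $\cL\cdot C=0$ for every curve with class in $\R_{\ge 0}[V]$, any such curve meeting $B$ lies in $B$, so $\Locus(V';\{0\}\to E)\subset B$---contradicting $\codim B\ge 2$. The crucial move you are missing is to trade the intractable pointwise question ``what is $p^{-1}(x)$?'' for the tractable divisorial statement ``$B\cap Y$ is a fixed part $E$ containing a whole fiber,'' and then to deform curves \emph{from that fiber} rather than \emph{through an arbitrary point of $B$}.
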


It is easy to see that the condition that $\dim Y - \dim Z > r$ is sharp. 
When $r=1$, this is discussed in \cite[Section~3]{BI09}, and 
the construction given there can be extended to include the following 
example in arbitrary codimension $r$. 

\begin{example}
\label{eg:sharp}
Let $r,s$ be two positive integers. 
Denoting by $u_0,\dots,u_r \in H^0(\P^r,\O_{\P^r}(1))$ a set of generators, 
consider the exact sequence
\[
0 \to \O_{\P^r}^{\oplus r} \xrightarrow{\a} 
\O_{\P^r}(r+1)^{\oplus r+1} 
\xrightarrow{\b} \O_{\P^r}(2r+1) 
\to 0
\]
where $\b$ is given on global sections by 
\[
\b \colon (s_0,\dots,s_r) \mapsto \sum_{i=0}^r s_i u_i^r,
\]
and $\a$ is given on global sections by
\[
\a \colon (t_1,\dots,t_r) \mapsto \Big( - \sum_{i=1}^r t_iu_i^{r+1},t_1u_0^ru_1,\dots, t_ru_0^ru_r \Big).
\]
Adding a new summand $\O_{\P^r}(2r+1)^{\oplus s}$ to the middle and right terms, 
with the identity map in between, we obtain the exact sequence
\[
0 \to \O_{\P^r}^{\oplus r} \to
\O_{\P^r}(r+1)^{\oplus r+1} \oplus \O_{\P^r}(2r+1)^{\oplus s}
\to \O_{\P^r}(2r+1)^{\oplus s+1} 
\to 0.
\]
Let $X = \P(\O_{\P^r}(r+1)^{\oplus r+1} \oplus \O_{\P^r}(2r+1)^{\oplus s})$
and $Y = \P(\O_{\P^r}(2r+1)^{\oplus s+1})$. 
We have a fiberwise embedding $Y \subset X$ of scrolls over $\P^r$. 
By construction, $Y$ is defined, scheme theoretically, by a regular section of $\O_X(1)^{\oplus r}$, 
where $\O_X(1)$ is the tautological line bundle. Note that this is an ample vector bundle; 
in particular, $Y$ is an ample subvariety of $X$. 
Now, we have $Y \cong \P^r \times \P^s$, and the second projection 
$Y \to \P^s$ does not extend to $X$. Note that this projection 
has relative dimension $r$.
\end{example}

\section{Extending rationally connected fibrations}
\label{s:Mori-contr}

We start by recalling some terminology from \cite{Kol96,BdFL08}
to which we refer for further details and basic properties.
Let $X$ be a smooth complex variety, and
denote by $\Hom_\bir(\P^1,X)$ the scheme parameterizing 
morphisms from $\P^1$ to $X$ that are birational to their images. 

\begin{definition}
An element $[f] \in \Hom_\bir(\P^1,X)$ is said to be a \emph{free rational curve}
(resp., a \emph{very free rational curve}) if $f^*T_X$ is nef (resp., ample). 
A \emph{family of rational curves} on $X$ is 
an arbitrary union of irreducible components of $\Hom_\bir(\P^1,X)$. 
\end{definition}

Let $V$ be a famility of rational curves on $X$. 
If $0 \in \P^1$ is a fixed point and $Z \subset X$ is a closed subscheme, then 
$V(\{0\} \to Z)$ denotes the closed subscheme of $V$ defined by
the condition that, for $[f] \in V$, we have $[f] \in V(\{0\} \to Z)$
if and only if $f(0) \in Z$.
The image of the evaluation map $\P^1 \times V \to X$ is denoted by 
$\Locus(V)$, and $\Locus(V;\{0\} \to Z)$ is defined similarly. 

Assume now that $X$ is projective. 

\begin{definition}
A family of rational curves $V \subset \Hom_{\bir}(\P^1,X)$ is a \emph{covering family} 
if $\Locus(V_i)$ is dense in $X$ for every irreducible component $V_i$ of $V$.
\end{definition}

Associated to every covering family $V$, 
there is a model $X\ratquot_V$ (only defined up to birational equivalence)
and a dominant rational map $\f\colon X \rat X\ratquot_V$
such that $\f$ restricts to a proper morphism over a nonempty open set of $X\ratquot_V$ and
a very general fiber is an equivalence class of an equivalence relation defined by $V$
(see \cite[Section~IV.4]{Kol96} for the precise definition of the equivalence
relation and the construction of $\f$).

\begin{definition}
The map $\f\colon X \rat X\ratquot_V$ is called the \emph{$\RC_V$-fibration} of $X$, 
and $X \ratquot_V$ the \emph{$\RC_V$-quotient}.
The variety $X$ is said to be \emph{$\RC_V$-connected} if $X\ratquot_V$ is a point. 
\end{definition}

Given a closed embedding $\iota \colon Y \inj X$ of a smooth subvariety, there is a
natural map $\iota_* \colon \Hom_\bir(\P^1,Y) \to \Hom_\bir(\P^1,X)$ given by composition.
For a set $S \subset \Hom_\bir(\P^1,X)$ we denote by $\iota^{-1}_*(S) \subset \Hom_\bir(\P^1,Y)$
its inverse image via $\iota_*$, and for a set $T \subset \Hom_\bir(\P^1,Y)$ we denote by 
$\iota_*(T) \subset \Hom_\bir(\P^1,X)$ its image via $\iota_*$. 

\begin{definition}
Given a family of rational curves $V$ on $X$, 
the \emph{restriction} $\rangle\iota^{-1}_*(V)\langle$ of $V$ to $Y$
is defined to be the largest family of rational curves on $Y$ 
that is contained in $\iota^{-1}_*(V)$; equivalently, $\rangle\iota^{-1}_*(V)\langle$ is the union
of all irreducible components of $\iota^{-1}_*(V)$ that are also irreducible 
components of $\Hom_\bir(\P^1,Y)$.
Similarly, given a family of rational curves $W$ on $Y$, 
its \emph{extension} $\langle\iota_*(W)\rangle$ to $X$
is defined to be the union
of all irreducible components of $\Hom_\bir(\P^1,X)$ that contains at least one irreducible 
component of $\iota_*(W)$.
\end{definition}

The main result of \cite{BdFL08}, which is recalled below, can be seen as providing 
a `rational' solution to \cref{conj:Som} in the context of rationally connected fibrations. 
A related result which applies to maximal rationally connected fibrations
over bases of positive geometric genus was also obtained in \cite{Occ06}.

\begin{theorem}[\protect{\cite[Theorem~3.6]{BdFL08}}]
\label{th:BdFL}
Let $X$ be a smooth complex projective variety
and $Y \subset X$ a smooth ample subvariety of codimension $r$. 
Denote by $\iota \colon Y \inj X$ the inclusion map.
Let $V \subset \Hom_\bir(\P^1,X)$ be a family of rational curves, 
and assume that the restriction to $Y$ of every irreducible component 
of $V$ is a covering family of rational curves on $Y$.
Let $V_Y := \rangle\iota^{-1}_*(V)\langle$ be the restriction of $V$ to $Y$, 
and let $\a \colon X \rat X\ratquot_V$ and $\b \colon Y \rat Y\ratquot_{V_Y}$ denote the respective rationally connected fibrations. 
Assume that $\dim Y - \dim Y\ratquot_{V_Y} > r$. 
Then there is a commutative diagram
\[
\xymatrix{
Y \ar@{^(->}[r]^\iota \ar@{-->}[d]_\b & X \ar@{-->}[d]^{\a} \\
Y\ratquot_{V_Y} \ar@{-->}[r]^\d & X\ratquot_V
}
\]
where $\d$ is a birational map. 
\end{theorem}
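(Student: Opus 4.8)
Write $\iota\colon Y\inj X$ for the inclusion, $S:=X\ratquot_V$, $Z:=Y\ratquot_{V_Y}$, and $g:=\dim Y-\dim Z$, so the hypothesis reads $g>r$. The first two ingredients are essentially formal. Since $V_Y=\rangle\iota^{-1}_*(V)\langle$ consists of curves that $\iota_*$ sends into $V$, two points of $Y$ joined by a chain of $V_Y$-curves are joined by a chain of $V$-curves in $X$; hence $\a\circ\iota\colon Y\rat S$ is constant on a very general fiber of $\b$ and, by the universal property of the $\RC_{V_Y}$-quotient \cite{Kol96}, factors as $\a\circ\iota=\d\circ\b$ for a unique $\d\colon Z\rat S$, which is exactly the asserted commutative square. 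For dominance of $\d$: a very general fiber $G$ of $\b$ has $\dim G=g>r$ and, being $\RC_{V_Y}$-connected hence $\RC_V$-connected, lies in a general fiber $F$ of $\a$, so $\dim F\ge g>r$; because $Y$ is an ample subscheme of codimension $r$ and $\dim F>r$, the intersection $Y\cap F$ is nonempty (the basic intersection property of ample subschemes \cite{Ott12}), so $\a\circ\iota=\d\circ\b$, and therefore $\d$, is dominant.

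\textbf{Reduction of birationality.} It remains to prove $\d$ is birational; since we are in characteristic zero it suffices to show that the general fiber of $\d$ is a single point, equivalently (chasing the square) that for $F$ a general fiber of $\a$ the intersection $Y\cap F$ lies in a single $\RC_{V_Y}$-equivalence class of $Y$. I would attack this by induction on $\dim X$. Assume $X$ is not $\RC_V$-connected, so $F$ is irreducible of dimension $<\dim X$; then $Y\cap F$ is nonempty and connected (ampleness, $\dim F>r$) and is again an ample subscheme of $F$ of codimension $r$, while $F$ is $\RC_{V|_F}$-connected, where $V|_F$ denotes the family of $V$-curves lying in $F$. The key point is that the restriction $V'$ of $V|_F$ to $Y\cap F$ verifies the theorem's hypothesis for the pair $(F,Y\cap F)$ with the \emph{same} constant $g$: a $V_Y$-curve meeting a $\b$-fiber $G$ lies in $G$, which lies in a single $\a$-fiber, hence in $F$; therefore the $\RC_{V_Y}$-class of a general point of $Y\cap F$ — a general $\b$-fiber, of dimension $g>r$ — is already contained in $Y\cap F$ and equals its $\RC_{V'}$-class, whence $\dim(Y\cap F)-\dim((Y\cap F)\ratquot_{V'})=g>r$. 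Applying the inductive hypothesis to $(F,Y\cap F,V')$ gives that $(Y\cap F)\ratquot_{V'}$ is birational to the point $F\ratquot_{V|_F}$, so $Y\cap F$ lies in a single $\b$-fiber, as desired. Thus everything reduces to the base case: \emph{if $X$ is $\RC_V$-connected, then $Y$ is $\RC_{V_Y}$-connected.}

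\textbf{The base case and the main obstacle.} This base case is where the real work lies, and it is here that ampleness of $Y$ and the strict inequality $g>r$ enter essentially; note that $\RC_V$-connectedness of $X$ does \emph{not} by itself force the classes of $V$-curves to span $N_1(X)_\Q$, so a purely numerical argument (descending a divisor pulled back from a positive-dimensional $Z$) cannot close the gap. The natural line is a deformation-of-rational-chains argument: supposing $\dim Z>0$, pick general $y_1,y_2\in Y$ in distinct $\RC_{V_Y}$-classes, join them in $X$ by a chain of $V$-curves (possible since $X$ is $\RC_V$-connected), and use ampleness of $Y$ together with $g>r$ to degenerate that chain — keeping both endpoints on $Y$ while pushing all of its curves and vertices into $Y$ — to a chain of $V_Y$-curves lying in $Y$, contradicting the choice of $y_1$ and $y_2$. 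Making this degeneration precise, by balancing the codimension count governed by $g>r$ against the positivity coming from ampleness (via \cref{th:Lefschetz}, \cref{th:Pic}, and the deformation theory of $\Hom_\bir(\P^1,X)$), is the step I expect to be the main obstacle. A secondary technical nuisance is that a general fiber $F$ of $\a$ need not be smooth, so the inductive step above strictly requires a version of the statement (and of \cref{th:Lefschetz}) valid for the mildly singular varieties that arise, or a reformulation that never leaves the smooth category.
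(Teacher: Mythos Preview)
Your formal steps (factorization through the $\RC_{V_Y}$-quotient and dominance of $\d$) are fine and essentially match the paper. The divergence is in how you handle birationality, and here you are both taking a much harder route and leaving a genuine gap.

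The paper does not attempt to prove birationality from scratch. It invokes \cite[Theorem~3.6]{BdFL08} as a black box: under the weaker hypotheses that $\cN_{Y/X}$ is ample and $N^1(X)\to N^1(Y)$ is surjective (both implied by ampleness of $Y$), that result already gives the commutative square with $\d$ dominant \emph{and generically finite}. The only thing left is to upgrade ``generically finite'' to ``birational'', and for this the paper uses a one-line connectedness argument: for general $s\in X\ratquot_V$ the fiber $X_s$ is smooth and, by \cite[Proposition~4.8]{Lau16}, $Y_s=Y\cap X_s$ is a positive-dimensional ample subvariety of $X_s$; hence $H^0(X_s,\Q)\to H^0(Y_s,\Q)$ is an isomorphism by \cref{th:Lefschetz}, so $Y_s$ is connected. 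Since $\d^{-1}(s)$ is a finite set indexing the $\b$-fibers that make up $Y_s$, connectedness forces $\deg\d=1$.

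You actually write down the key observation---``$Y\cap F$ is nonempty and connected (ampleness, $\dim F>r$)''---but you do not use it to finish, precisely because you never import the generically-finite conclusion from \cite{BdFL08}. Without that input, connectedness of $Y\cap F$ does not obviously bound the number of $\b$-fibers it contains, and you are forced into your inductive scheme. That scheme has two real problems you yourself flag: the base case (``$X$ $\RC_V$-connected $\Rightarrow$ $Y$ $\RC_{V_Y}$-connected'') is not proved, only a deformation-of-chains heuristic is offered; and the inductive step leaves the smooth category, so neither the theorem's hypotheses nor \cref{th:Lefschetz}/\cref{th:Pic} apply as stated to $(F,Y\cap F)$. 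Both are serious, and the first is not a minor technicality---it is essentially the content of \cite{BdFL08} that you are trying to reinvent.

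In short: cite \cite[Theorem~3.6]{BdFL08} for dominance plus generic finiteness, then use ampleness of $Y_s$ in $X_s$ (via \cite{Lau16}) and \cref{th:Lefschetz} to get connectedness and hence degree one. Your induction and the unresolved base case can be dropped entirely.
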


\begin{remark}
The statement of \cref{th:BdFL} is actually a slight variation of \cite[Theorem~3.6]{BdFL08}.
The original statement in \cite{BdFL08} imposes a weaker condition on $Y$, 
only requiring that the normal bundle $\cN_{Y/X}$ is ample and the induced map
on N\'eron--Severi spaces $N^1(X) \to N^1(Y)$ is surjective
(here, $N^1(X) = \NS(X)_\R$); however, 
the conclusion is also weaker, namely, that the map $\d$ is dominant and generically finite. 
By assuming that $Y$ is an ample subvariety (which implies that $\cN_{Y/X}$ is ample
and $N^1(X) \to N^1(Y)$ is surjective), we can conclude that $\d$ must in fact be birational. 
To see this, let $s \in X\ratquot_V$ be a general point, and let $X_s$ and $Y_s$ be the 
fibers over $s$.
Here we assume that the fiber $\d^{-1}(s)$ is a finite
set of cardinality equal to the degree of $\d$. 
Note that $X_s$ is smooth and connected. 
By \cite[Proposition~4.8]{Lau16}, $Y_s$ is a positive dimensional ample subvariety of $X_s$, 
and therefore it is connected since, by \cref{th:Lefschetz}, 
the map $H^0(X_s,\Q) \to H^0(Y_s,\Q)$ is an isomorphism. 
This implies that $\d$ is birational. 
\end{remark}

As an application of the above \lcnamecref{th:BdFL}, 
\cref{conj:Som} was verified in \cite{BdFL08} when $\p\colon Y \to Z$ is a projective bundle
or a quadric fibration with integral fibers and relative Picard number 1, 
assuming that either $Y$ is defined by a regular section of an ample vector bundle on $X$
(as in the original conjecture of Sommese), or that $Z$ is a curve. 
We refer to the introduction of \cite{BdFL08} 
for quick overviews of other related results in the literature.

Under some conditions on the fibers which we discuss next,
we apply \cref{th:BdFL} to prove \cref{conj:Som} for 
Mori contractions and, more generally, fibrations with rationally connected fibers.

\begin{definition}
\label{def:RC-fibers}
A surjective morphism of varieties $\f \colon X \to Y$ is said to have
\emph{rationally connected fibers} if a general fiber of $\f$ is rationally connected
(or, equivalently, if every fiber is rationally chain connected).
\end{definition}

\begin{definition}
\label{def:contr-no-div}
A surjective morphism of varieties $\f \colon X \to Y$ is said 
\emph{not to contract divisors} if there are no prime divisors
$D$ in $X$ such that $f(D)$ has codimension $\ge 2$ in $Y$.
\end{definition}

\begin{definition}
\label{def:pure-fiber-type}
We say that a Mori contraction $\f \colon X \to Y$ 
is \emph{of fiber-type} if all fibers are positive dimensional, 
and that it is of \emph{pure fiber-type} if every extremal ray of the face
of the Mori cone $\CNE(X) \subset N_1(X)$ 
contracted by $\f$ defines an extremal Mori contraction of fiber-type. 
\end{definition}

For example, any extremal Mori contraction of fiber-type is of pure fiber-type, and
a conic bundle over a curve admitting reducible fibers is a contraction of fiber-type
but not of pure fiber-type.

\begin{theorem}
\label{th:Mori-contr}
Let $X$ be a smooth complex projective variety and
$Y \subset X$ a smooth ample subvariety of codimension $r$.
Let $\p \colon Y \to Z$ be a surjective morphism with $\dim Y - \dim Z > r$, 
and assume that either
\begin{enumerate}
\item
\label{item:pi-pure-fiber-type}
$\p$ is a Mori contraction of pure fiber-type, or
\item
\label{item:pi-contr-no-div}
$\p$ does not contract divisors. 
\end{enumerate}
Assume furthermore that there exists an open set $Z^* \subset Z$ with complement of codimension $\ge 2$
such that for every $z \in Z^*$
the fiber $Y_z$ is irreducible and contains in its smooth locus a very free rational curve.
Then $\p$ extends uniquely to a morphism $\~\p \colon X \to Z$.
\end{theorem}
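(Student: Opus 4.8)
The plan is to use \cref{th:BdFL} to produce a birational map $\~\p \colon X \rat Z$ extending $\p$, and then to prove that this rational map has no points of indeterminacy, hence is a morphism. The extension itself is immediate once we exhibit the right covering family: for each $z \in Z^*$ pick a very free rational curve in the smooth locus of $Y_z$, and let $W \subset \Hom_\bir(\P^1,Y)$ be the union of the irreducible components containing these curves. Since a very free curve deforms to cover an open set of its fiber, each component of $W$ restricts (over $Z^*$) to a covering family of the corresponding fiber, so $W$ is a covering family on $Y$ whose $\RC_W$-quotient $Y\ratquot_W$ is birational to $Z$ (the fibers of $\p$ over $Z^*$ being rationally connected, since they are rationally chain connected and contain a very free curve, the $\RC_W$-fibration refines $\p$ on one hand, and cannot be coarser because $W$-curves lie in the fibers of $\p$; here codimension $\ge 2$ of $Z \setminus Z^*$ lets us ignore the bad locus). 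Set $V := \langle\iota_*(W)\rangle$, the extension of $W$ to $X$; by construction $\rangle\iota^{-1}_*(V)\langle \supseteq W$ and one checks equality (or at least that the two have the same $\RC$-quotient on $Y$) so that the hypothesis $\dim Y - \dim Y\ratquot_{V_Y} > r$ follows from $\dim Y - \dim Z > r$. Then \cref{th:BdFL} gives a commutative square with $\d \colon Y\ratquot_{V_Y} \rat X\ratquot_V$ birational, and composing with $\d^{-1}$ and the birational map $Y\ratquot_{V_Y} \rat Z$ produces $\~\p \colon X \rat Z$ with $\~\p|_Y = \p$.

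The substance of the proof is showing $\~\p$ is a morphism, i.e.\ that its indeterminacy locus $B \subset X$ is empty. The strategy is: (i) $B$ cannot meet $Y$; (ii) $B$ cannot have codimension $\ge 2$ in $X$, using a Grothendieck--Lefschetz-type argument and the numerical/divisorial structure of $\~\p$; (iii) under hypothesis \eqref{item:pi-pure-fiber-type} or \eqref{item:pi-contr-no-div}, $\~\p$ contracts no divisor through a general point of any divisorial component of $B$, which forces $B$ to have codimension $\le 1$, contradicting (ii) — or directly rules out divisorial $B$. For (i): resolving $\~\p$ by a sequence of blow-ups $\pi \colon X' \to X$ with center in $B$, the strict transform $Y'$ of $Y$ maps isomorphically to $Y$ (as $B \cap Y$ is empty once we know it, but a priori we argue the other way), and more to the point, $\p$ being a genuine morphism on $Y$ means $\~\p$ is defined along $Y$, so $B \cap Y = \varnothing$; since $Y$ is ample, by Ottem's theory the complement of any ample subvariety cannot contain a positive-dimensional complete subvariety disjoint from it in the naive sense — more precisely, one uses that $E$ being $(r-1)$-ample forces every subvariety of $X$ of dimension $\ge r$ to meet $Y$, so $\dim B \le r - 1$.

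For (ii)–(iii), which is where the real work and the main obstacle lie: one takes the graph resolution $X' \xrightarrow{\pi} X$, $X' \xrightarrow{q} Z$, with exceptional divisors $E_j$ over $B$. The map $q$ restricted to a general $E_j$ is either generically finite onto $Z$ or contracts $E_j$; the contracted case is excluded by hypothesis \eqref{item:pi-contr-no-div} directly, and by \eqref{item:pi-pure-fiber-type} because a fiber-type Mori contraction structure on $\p$ propagates (via the relative Picard number and the fact that $Y$ is ample, using \cref{th:Pic} to lift the polarization/ray data from $Y$ to $X$) to give $\~\p$ the same pure fiber-type structure, whose contractions have equidimensional general fibers and hence cannot acquire a spurious contracted divisor. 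In the generically-finite case, pulling back a very ample divisor $H_Z$ on $Z$ and comparing $q^*H_Z$ with $\pi^*(\text{closure of }\~\p^*H_Z)$ shows the difference is supported on the $E_j$ with positive coefficients; restricting to $Y' \cong Y$ (which misses all $E_j$ by (i)) and using that $\Pic(X) \to \Pic(Y)$ is injective with finite cokernel (\cref{th:Pic}, as $\dim Y \ge 3$ since $\dim Y - \dim Z > r \ge 1$) forces these coefficients to vanish, so $q$ factors through $\pi$ and $B = \varnothing$. The main obstacle I anticipate is exactly this last numerical step — controlling the exceptional coefficients and ensuring the Grothendieck--Lefschetz comparison is valid requires that no $E_j$ be contracted by $q$, which is why hypotheses \eqref{item:pi-pure-fiber-type} and \eqref{item:pi-contr-no-div} are built in, and handling these two cases uniformly (the Mori-theoretic bookkeeping of extremal faces and the ampleness of $Y$ interacting with the cone $\CNE$) will be the delicate part; uniqueness of $\~\p$ is then automatic, since two extensions agree on the dense (indeed ample, hence connected and "Zariski-dense enough") subvariety $Y$ and $Z$ is separated.
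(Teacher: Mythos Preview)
Your setup—constructing $W$, $V$, and applying \cref{th:BdFL} to obtain $\~\p \colon X \rat Z$—is essentially correct and matches the paper (the paper cites \cite[Proposition~3.11]{BdFL08} for the identification of the $\RC_{V_Y}$-fibration with $\p$, which you gloss over but could simply quote).

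The substantive gap is your step (i). The claim ``$\p$ being a genuine morphism on $Y$ means $\~\p$ is defined along $Y$, so $B \cap Y = \varnothing$'' is false. The commutative diagram only says that $\~\p|_Y$ and $\p$ agree as \emph{rational maps} from $Y$; the fact that $\p$ happens to be a morphism on all of $Y$ does not imply that $\~\p$, as a rational map from $X$, is regular at points of $Y$. (Model example: projection $\P^2 \rat \P^1$ from a point $p$ restricted to a line $\ell$ through $p$ is constant, hence a morphism on $\ell$, yet $p \in \ell$ is an indeterminacy point.) Concretely, if $|\Lambda| \subset |\cL|$ is the linear system on $X$ defining $\~\p$, its restriction to $Y$ defines $\p$ only after removing a fixed part $E$; that fixed part is exactly $B \cap Y$. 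The paper in fact proceeds in the \emph{opposite} direction: a dimension count gives $\dim B \ge \dim X - \dim Z - 1$, and combined with $\dim Y - \dim Z > r$ this yields $\dim Y + \dim B \ge \dim X$, so ampleness of $Y$ forces $B \cap Y \ne \varnothing$, and this intersection is the nonzero Cartier divisor $E$ on $Y$.

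Since (i) fails, your (ii)--(iii), which rely on $B \cap Y = \varnothing$ to make the strict transform $Y' \cong Y$ miss the exceptional divisors and then run a Picard comparison, cannot get off the ground. The paper's actual mechanism is a deformation-theoretic sweeping argument absent from your proposal: hypotheses \eqref{item:pi-pure-fiber-type}/\eqref{item:pi-contr-no-div} are used to show $\p(\Supp E)$ has codimension one in $Z$, hence meets $Z^*$; so $E$ contains an entire fiber $F$ with a very free curve $h$ in $F_\sm$. Setting $g = \iota \circ h \in V$, one deforms $g$ in $X$ subject to the constraint $\{0\} \to E$ and shows, via a rank estimate on the evaluation map, that $\Locus(V';\{0\}\to E)$ has dimension $\ge \dim X - 1$. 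But every $V$-curve has $\cL$-degree zero, so any such curve meeting $B$ lies entirely in $B$; hence this locus is contained in $B$, contradicting $\codim B \ge 2$. This rational-curve sweeping is the heart of the proof.
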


\begin{proof}
Since the statement is trivial if $Z$ is a point, we can assume that $\dim Z \ge 1$, 
and hence $\dim Y \ge 3$. 
By \cref{th:Pic}, the inclusion $\iota \colon Y \inj X$
induces an isomorphism $\iota^* \colon N^1(X) \to N^1(Y)$ and, by duality, 
an isomorphism $\iota_* \colon N_1(Y) \to N_1(X)$. 

Let $F$ be a fiber of $\p$ of dimension $\dim F = \dim Y - \dim Z$
such that the smooth locus $F_\sm$ of $F$ contains a very free rational curve
$h \colon \P^1 \to F_\sm$. Note that any fiber over $Z^*$ will satisfy this condition.
Let $U \subset Y$ be an open set containing the image of $h$ and such that
$F \cap U \subset F_\sm$. 
We see by the splitting of the exact sequence
\[
0 \to h^*T_{F \cap U} \to h^*T_U|_{F \cap U} \to h^*N_{F \cap U/U} \to 0
\] 
that $h$ defines, by composition with
the inclusion of $F$ in $Y$, a free rational curve $f \colon \P^1 \to Y$.
If $W$ is the irreducible component of $\Hom_\bir(\P^1,Y)$ containing $[f]$, 
then $W$ is a covering family of rational curves on $Y$. 
Note that $\R_{\ge 0}[W] \subset N_1(Y)$ is contained in the extremal face
of $\CNE(Y)$ contracted by $\p$, and this means that the latter, viewed 
as a rational map, factors through the $\RC_W$-fibration $Y \rat Y\ratquot_{W}$. 
As $F$ is $\RC_W$-connected, we conclude that these two maps have 
the same very general fibers and
hence $\p$ agrees, as rational maps, with the $\RC_W$-fibration. 

Let $V := \langle i_*(W) \rangle \subseteq \Hom_{\bir}(\P^1,X)$
be the extension of $W$ to $X$, and consider
the restriction $V_Y := \;\rangle i_*^{-1}(V) \langle\; \subseteq \Hom_{\bir}(\P^1,Y)$
of $V$ to $Y$. By \cite[Proposition~3.11]{BdFL08}, the
$\RC_{V_Y}$-fibration $Y \rat Y\ratquot_{V_Y}$
agrees with the $\RC_W$-fibration of $Y$ and hence with
the contraction $\p$. 
Note also that 
\[
\R_{\ge 0}[V_Y] = \R_{\ge 0}[V] = \R_{\ge 0}[W]
\]
via the identification $\iota_* \colon N_1(Y) \cong N_1(X)$.

Let $\~\p \colon X \rat X\ratquot_V$ be the $\RC_V$-fibration. 
The models $X\ratquot_V$ and $Y\ratquot_W$ are 
defined up to birational equivalence, but $Z$, which is a model for $Y\ratquot_W$, 
is uniquely determined, up to isomorphism, by the contraction $\p$. 
By \cref{th:BdFL}, $X\ratquot_V$ is birational to $Z$, thus we have a commutative diagram
\[
\xymatrix{
Y \ar@{^(->}[r]^\iota \ar[d]_\p & X \ar@{-->}[dl]^{\~\p} \\
Z & 
}.
\]

Fix an embedding $Z \subset \P^m$. 
Let $\cA = \O_{\P^m}(1)|_Z$, and let $\cL$ be a line bundle
whose global sections define the rational map $X \rat \P^m$. 
If $p \colon X' \to X$ is a proper birational morphism
such that $q := \~\p \o p \colon X' \to Z$ is a morphism, then
we have $\cL \cong \O_X(p_*q^*A)$ for any $A \in |\cA|$.
Our goal is to show that $\cL|_Y \cong \p^*\cA$.

By construction, $\~\p$ is defined by a linear subsystem $|\Lambda|$ of $|\cL|$, where 
$\Lambda \subset H^0(X,\cL)$ is a subspace. 
Let $B \subset X$ denote the base scheme of $|\Lambda|$. 
Note that the support of $B$ is the indeterminacy locus of $\~\p$. 
To prove the theorem, we need to show that $B = \emptyset$. 
This will show that $\cL|_Y \cong \p^*\cA$, hence that $\f$ is a morphism giving the desired extension of $\p$. 

Suppose by contradiction that $B \ne \emptyset$. 
Then
\[
\dim B \ge \dim X - \dim Z - 1.
\]
This is proved in \cite{Ste68}. Alternatively, 
one can see this directly by taking a general linear projection $\P^m \rat \P^{\dim Z}$.
Since the induced map $Z \to \P^{\dim Z}$ is a morphism, it follows
that the indeterminacy locus of $\~\p$ is the same as the one of its composition 
with the projection to $\P^{\dim Z}$, and hence $B$ is cut out, 
set theoretically, by $\dim Z + 1$ divisors. This implies the lower-bound on $\dim B$ stated above. 

Since $Y$ is ample in $X$ and $\dim Y + \dim B \ge \dim X$, it follows that 
\[
B \cap Y \ne \emptyset.
\]
Let $\Lambda_Y \subset H^0(Y,\cL|_Y)$ be
the image of $\Lambda$ under restriction map $H^0(X,\cL) \to H^0(Y,\cL|_Y)$.
The commutativity of the above diagram 
implies that $B$ cuts, scheme theoretically, a nonempty effective Cartier divisor $E$ on $Y$
such that
\[
|\Lambda_Y| = |\p^*\cA| + E.
\]
Note that, in particular, $\cL|_Y \cong \p^*\cA \otimes \O_Y(E)$.

We claim that $\p(\Supp(E))$ has codimension one in $Z$. 
This is clear if $\p$ satisfies the condition given in \eqref{item:pi-contr-no-div}
in the statement of the \lcnamecref{th:Mori-contr}.
Suppose then that $\p$ satisfies \eqref{item:pi-pure-fiber-type}. 
In this case, every irreducible curve $C$ in $Y$ that is contracted
by $\p$ is numerically equivalent to a multiple of a curve $C'$ that is
contained in a general fiber of $\p$. Since $\~\p$ restricts to a proper
morphism $X^\o \to Z^\o$ for some nonempty open subset $Z^\o \subset Z$,
it follows that $\cL|_Y\.C = 0$. As clearly $\p^*\cA\.C$, 
this gives $E_Y\.C = 0$, hence it follows by the cone theorem that $\O_Y(E)$ is the pull-back
of a line bundle on $Z$. This means that $E$ is the pull-back of a Cartier divisor on $Z$,
and therefore $\p(\Supp(E))$ has codimension one in $Z$, as claimed.

Recall the assumption stated in the \lcnamecref{th:Mori-contr} 
on the fibers of $\p$ over the open set $Z^* \subset Z$. 
Since the complement of $Z^*$ has codimension $\ge 2$, 
it follows by the above claim that 
\[
\p(\Supp(E)) \cap Z^* \ne \emptyset.
\]
Since the fibers of $\p$ over $Z^*$ are irreducible, 
$E$ must contain in its support a fiber $F := Y_z$ over a point $z \in Z^*$,
and such fiber contains a very free rational curve $h \colon \P^1 \to F_\sm$ within its smooth locus.
By composing with the inclusion of $F$ in $Y$, 
this yields a free rational curve $f \colon \P^1 \to Y$ supported in $F_\sm$. 
We may assume without loss of generality that this fiber $F$ is the same as the fiber picked at the
beginning of the proof, and that the maps $h$ and $f$ are also the same. 
Let $g := \iota \o f \colon \P^1 \to X$. Note that $[g] \in V$. 

Pick an irreducible component $V'$ of $V$ that contains $[g]$. 
Note that $[g] \in V'(\{0\} \to Y)$ and, in fact, $[g] \in V'(\{0\} \to E)$. 
The argument of \cite[Lemma~3.4]{BdFL08} shows that $V'(\{0\} \to Y)$ is smooth at $[g]$ and that the evaluation map
\[
\P^1\times V'(\{0\}\to Y) \to X
\] 
has full rank, equal to $\dim X$, at $(q,[g])$ where $q$ is any point in $\P^1\setminus\{0\}$. 
Its restriction to $V'(\{0\}\to E) \subset V'(\{0\}\to Y)$, namely, the evaluation map
\[
\P^1\times V'(\{0\}\to E) \to X,
\] 
has rank $\ge \dim X - 1$ at $(q,[g])$. 
This follows from the fact that the subscheme $V'(\{0\}\to E) \subset V'(\{0\}\to Y)$ is cut out by one equation,
locally at $[g]$. Indeed, we have the following fiber diagram
\[
\xymatrix{
V'(\{0\}\to E) \ar@{}[rd]|-(.43)\square \ar[d]\ar@{^(->}[r] & V'(\{0\}\to Y) \ar[d] \\
E \ar@{^(->}[r] & Y
}
\]
where the vertical arrows send any element $[g']$ to $g'(0)$, and $E$ is locally
cut out by one equation in a neighborhood of $g(0)$.
Therefore $\Locus(V'(\{0\}\to E))$ has dimension at least $\dim X - 1$. 

To conclude, it suffices to show that under our assumption that $B \ne \emptyset$, we have  
\[
\Locus(V';\{0\}\to E) \subset B.
\]
This will contradict the fact that the indeterminacy locus of a rational map 
on a normal variety must have codimension $\ge 2$, thus finishing the proof.

The above inclusion follows from the following observation.
Let $C$ be an irreducible curve on $X$ with numerical class in 
$\R_{\ge 0}[V]$. Recall that this cone is the image of $\R_{\ge 0}[W]$ under 
the isomorphism $\iota_* \colon N_1(Y) \cong N_1(X)$. 
Using again that $\~\p$ restricts to a proper
morphism $X^\o \to Z^\o$ and $W$ is a covering family, 
we see that $\cL\.C = 0$. 
This implies that for any such curve $C$ we have that either $C \cap B = \emptyset$
or $C \subset B$. 
Now, since every curve parameterized by an element of $V'(\{0\}\to E)$
meets $E$ and hence $B$, it follows that $\Locus(V';\{0\}\to E)$ must be fully contained in $B$.
\end{proof}

\section{Extending Fano fibrations}
\label{s:Fano-fibr}

\cref{th:Mori-contr} can be used to settle \cref{conj:Som} for fibrations
in Fano complete intersections of index larger than the codimension of complete
intersection.

Recall that a morphism of varieties $\p \colon Y \to Z$ is a \emph{projective bundle} 
(or \emph{$\P^n$-bundle}, if $n$ is the relative dimension)
if it is locally of the form $U \times \P^n \to U$, with $U \subset Z$ open, and
the transition functions are linear. 
If $Z$ is smooth, then every projective bundle over $Z$ is isomorphic to the projectivization of a 
locally free sheaf on $Z$ \cite[Exercise~II.7.10]{Har77} and therefore admits a polarization $\cH$
inducing a linear polarization on the fibers. 

\begin{definition}
\label{def:Fano-ci-fibr}
We say that a flat morphism $\p \colon Y \to Z$ 
of relative dimension $n \ge 1$ is a \emph{fibration in Fano complete intersections} 
if there exists a $\P^{n+c}$-bundle $\p' \colon Y' \to Z$
and a fiberwise embedding $Y \inj Y'$ over $Z$
such that the general fiber of $\p$ is a Fano variety and every fiber of $\p$ is
embedded as a (possibly singular) nondegenerate complete intersection of codimension $c$ in the corresponding
fiber of $\p'$. The number $c$ is called the \emph{codimension} of $\p$, and the
\emph{index} of $\p$ is the Fano index of a general fiber. 
\end{definition}

If $n$ is the relative dimension of $\p$ and $d_1,\dots,d_c$ are the degrees of the equations
cutting the fibers of $\p$ in the fibers of $\p'$, then the index is 
given by $n+c + 1 - \sum d_i$, 
with the only exception when $\p$ is a conic bundle, which has index 2 and not 1. 

Special cases of fibrations in Fano complete intersections include
projective bundles, which correspond to the case $c=0$, 
and \emph{quadric fibrations}, which correspond to the case $c=1$ and $d_1 = 2$. 
The following result implies, in particular, 
that \cref{conj:Som} holds for all projective bundles and quadric fibrations. 

\begin{corollary}
\label{th:P-fibr-Q-fibr}
Let $X$ be a smooth complex projective variety 
and $Y \subset X$ a smooth ample subvariety of codimension $r$.
Assume that $\p \colon Y \to Z$ is a fibration in Fano complete intersections
of codimension $c \ge 0$ and index $> c$, with $\dim Y - \dim Z > r$.
Then $\p$ extends uniquely to a morphism $\~\p \colon X \to Z$.
\end{corollary}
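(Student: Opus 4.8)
The plan is to deduce the corollary from Theorem~\ref{th:Mori-contr} by verifying its hypotheses; the only real work is checking the fiber condition over a suitable open set $Z^*$ with complement of codimension $\ge 2$. First I would handle the easy structural parts. Since $\p$ is flat of relative dimension $n \ge 1$ with a general fiber that is Fano, and every fiber is a nondegenerate complete intersection of codimension $c$ in the corresponding fiber of a $\P^{n+c}$-bundle $\p' \colon Y' \to Z$, the fibers of $\p$ are rationally connected (Fano complete intersections of positive dimension are rationally connected, e.g.\ by Campana and Kollár--Miyaoka--Mori). Moreover $\p$ has no multiple fibers in codimension one and is equidimensional, so in particular $\p$ does not contract divisors: every prime divisor $D \subset Y$ dominates $Z$ or maps onto a divisor in $Z$, since otherwise $D$ would lie in a union of fibers of dimension $n$ over a locus of codimension $\ge 2$, contradicting $\dim D = \dim Y - 1 = \dim Z + n - 1 \ge \dim Z + n - (\text{codim} \ge 2)$ only if... — more cleanly, a fiber has pure dimension $n$ by flatness, so a divisor contained in fibers over a codimension-$\ge 2$ locus would have dimension $\le \dim Z - 2 + n < \dim Y - 1$. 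Thus hypothesis~\eqref{item:pi-contr-no-div} of Theorem~\ref{th:Mori-contr} holds, and we do not even need to discuss the Mori-contraction alternative.

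Next, the dimension inequality $\dim Y - \dim Z > r$ is assumed, so it remains only to produce the open set $Z^*$. The natural choice is to let $Z^* \subset Z$ be the locus of points $z$ over which the fiber $Y_z$ is an irreducible complete intersection of codimension $c$ in $Y'_z \cong \P^{n+c}$ that is smooth (or at least normal with only the mild singularities that Fano complete intersections can acquire). The complement $Z \setminus Z^*$ is the image of the singular locus of $\p$ together with the locus where the fiber fails to be irreducible; I would argue this has codimension $\ge 2$ as follows. The general fiber is Fano, hence irreducible, and the equations defining the fibers vary in a linear system over $Z$, so the locus where the complete intersection fails to be irreducible (equivalently, where it becomes reducible or drops dimension) is a proper closed subset; to push it to codimension $\ge 2$ one restricts $\p$ over a curve $C \subset Z$ and uses that a complete intersection degenerating from an irreducible one stays irreducible in codimension one — this is where I would invoke that along a general curve the total space is normal (adjunction plus the index condition) so fibers over codimension-one points are still irreducible complete intersections. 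The index hypothesis $> c$ enters precisely here and in the next step: it guarantees $-K$ of the fiber is ample with enough positivity that the smooth fibers, and the mildly singular ones over codimension-one points, carry very free rational curves.

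Then I would produce the very free rational curve in the smooth locus of $Y_z$ for $z \in Z^*$. A smooth Fano complete intersection of index $\ge 1$ and dimension $\ge 1$ contains a very free rational curve — lines if the index is large, conics or low-degree rational curves in general — by the standard theory (Kollár, Koll\'ar--Miyaoka--Mori); the index condition $> c$ is exactly the numerical condition one wants. For the (possibly singular) fibers over points of $Z^*$, one either shrinks $Z^*$ further so that all fibers over it are smooth — which is legitimate provided the non-smooth locus in $Z$ still has codimension $\ge 2$ — or one uses that a very free rational curve can be chosen inside the smooth locus $F_{\sm}$ when $F$ is normal with lci singularities, since such curves deform away from the singular locus. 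The cleanest route is: after possibly shrinking, take $Z^*$ to be the smooth locus of $\p$ intersected with the locus of irreducible fibers, argue its complement has codimension $\ge 2$, and then every fiber over $Z^*$ is a smooth Fano complete intersection of dimension $\ge 1$ and index $> c \ge 0$, hence irreducible and containing a very free rational curve in $F_{\sm} = F$. With all hypotheses of Theorem~\ref{th:Mori-contr} verified, that theorem gives the unique extension $\~\p \colon X \to Z$, and uniqueness is immediate since $Y$ is dense in no sense — rather, since two extensions agree on the dense subset... in fact uniqueness follows as in Theorem~\ref{th:Mori-contr} because a morphism to a separated scheme is determined by its restriction to the ample (hence connected, by Theorem~\ref{th:Lefschetz}) subvariety once we know an extension exists.

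The main obstacle I anticipate is the codimension-one analysis: showing that the locus in $Z$ over which the fiber fails to be an irreducible complete intersection with a very free rational curve in its smooth locus really has codimension $\ge 2$, rather than just being a proper closed subset. This requires controlling how a flat family of Fano complete intersections can degenerate over a codimension-one point of the base, and the index bound $> c$ is the hypothesis that makes this work — it forces enough positivity that the generic behavior propagates. I would treat this by base-changing to a smooth curve, using normality of the resulting total space (a consequence of adjunction and the index condition together with Serre's criterion), and concluding that the special fiber remains an irreducible complete intersection.
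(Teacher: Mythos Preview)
Your overall strategy is right: verify hypothesis~\eqref{item:pi-contr-no-div} of \cref{th:Mori-contr} (which you do correctly via flatness and equidimensionality) and then produce the open set $Z^*$. The gap is in the construction of $Z^*$ and, more specifically, in how you obtain a very free rational curve in the smooth locus of a possibly singular fiber.

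Your ``cleanest route'' --- taking $Z^*$ to be the locus of smooth irreducible fibers --- does not work: already for a quadric fibration (the case $c=1$), singular fibers typically occur over a divisor in $Z$, so the complement of this $Z^*$ has codimension~$1$, not $\ge 2$. Your alternative, that very free curves ``deform away from the singular locus'' on a normal l.c.i.\ fiber, is not justified and is not true without further input; normality alone does not force the existence of a very free rational curve in $F_\sm$. Your final paragraph correctly isolates the obstacle but the proposed resolution (normality of the total space over a curve via adjunction and Serre) only yields irreducibility of the special fiber, not the very free curve you need.

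The paper's proof fills this gap with two concrete steps that make essential and precise use of the bound $\text{index} > c$. First, it shows that over the complement of a codimension-$\ge 2$ set in $Z$ every fiber $F$ has $\dim\Sing(F) < c$: restrict over a general complete intersection curve $B \subset Z$, so that by Bertini the total space $W = \p^{-1}(B)$ is \emph{smooth}; then a direct Jacobian computation in local coordinates of the $\P^{n+c}$-bundle shows that a fiber with $\dim\Sing(F) \ge c$ would force $W$ to be singular. Second, once $\dim\Sing(F) < c$, intersecting $F$ with $c$ general hyperplanes of $\P^{n+c}$ produces a smooth positive-dimensional Fano complete intersection lying entirely in $F_\sm$ (here one uses $n \ge 2$ and index $> c$ to ensure positive dimension); this smooth Fano carries a very free rational curve, and since its normal bundle in $F_\sm$ is a sum of $\O(1)$'s the curve remains very free in $F_\sm$. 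This is where the hypothesis index $> c$ is actually consumed --- not as a vague positivity statement, but as the exact numerical condition allowing the $c$-fold hyperplane slice to stay positive-dimensional and miss $\Sing(F)$.
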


\begin{proof}
The statement is trivial if $Z$ is a point, so we can assume that $\dim Z \ge 1$.
Hence $\p$ has relative dimension $n \ge 2$. 
Note that $\p$ satisfies the condition 
of \cref{th:Mori-contr} given in \eqref{item:pi-contr-no-div}.
Then the \lcnamecref{th:P-fibr-Q-fibr} follows from \lcnamecref{th:Mori-contr}
once we verify the condition on the fibers of $\p$ on a suitable open
set $Z^* \subset Z$ stated in the \lcnamecref{th:Mori-contr}.

We fix a fiberwise embedding of $Y$ into a $\P^{n+c}$- bundle $\p' \colon Y' \to Z$
as in the definition. 

By taking $c$ general hyperplane sections, one sees 
that every Fano complete intersection $V \subset \P^{n+c}$ of dimension $n \ge 2$, codimension $c \ge 0$,
and index $> c$ contains a very free rational curve
in its smooth locus, provided the singular locus of $V$ has dimension $< c$. 
So, all we need to check is that, away from a set of codimension $\ge 2$
in the base $Z$, the fibers of $\p$ have singular locus of dimension $< c$. 

This can be checked by restricting $\p$ over a general
complete intersection curve $B \subset Z$. Set $W := \p^{-1}(B)$ and $W' := (\p')^{-1}(B)$, 
and let $\p|_W \colon W \to B$ and $\p'|_{W'} \colon W' \to B$
be the restrictions of $\p$ and $\p'$. By Bertini, we can assume that $W$, $W'$ and $B$ are all smooth.
As the fibers of $\p|_W$ have dimension $\ge 2$, a 
local computation of the equations of $W$ in $W'$ then shows that the presence
of fibers of $\p|_W$ with singular locus of dimension $\ge c$ would confute the smoothness of $W$.

To see this, assume by contradiction that $\p|_W$ has a fiber $F$ with singular locus 
of dimension $\ge c$. Let $t$ be a local parameter on $B$ centered at the base $p$ of the fiber, 
and let $(x_0:\dots:x_{n+c})$ be homogeneous coordinates of $\P^{n+c}$, where $n$ is the relative dimension 
of $\p$. We can assume that $W$ is defined in a local trivialization $U \times \P^{n+c}$ of $W'$ 
by the equations $f_i + tg_i = 0$, for $1 \le i \le c$, where
$f_i \in k[x_0,\dots,x_{d+1}]$ are the forms defining $F$ in $\P^{n+c}$
and $g_i \in \O_C(U)[x_0,\dots,x_{d+1}]$ are forms of the same degrees in the variables $x_i$.
Let $g_i^0 \in k[x_0,\dots,x_{d+1}]$ be the specialization of $g_i$ at the point $p \in B$. 
By computing the Jacobian ideal, we see that $W$ is singular
along the set $(\Sing(F) \cap \{ g_1^0 = \dots = g_c^0 = 0\}) \times \{p\}$, 
and this set is non-empty if $\dim \Sing(F) \ge c$. 
\end{proof}

By imposing an additional condition on the restriction map on Picard groups, 
we obtain the following classification result for projective bundles and quadric fibrations. 

\begin{theorem}
\label{cor:scroll-quadric}
Let $X$ be a smooth complex projective variety
and $Y \subset X$ a smooth ample subvariety of codimension $r$.  
Assume that the restriction map $\Pic(X) \to \Pic(Y)$ is surjective. 
Let $\p \colon Y \to Z$ be either 
\begin{enumerate}
\item
\label{eq:proj-bundle}
a projective bundle or 
\item
\label{eq:quadric-fibr}
a quadric fibration with integral fibers,
\end{enumerate}
and assume that $\dim Y - \dim Z > r$. 
Then $\p$ extends uniquely to a morphism $\~\p \colon X \to Z$ which is 
a projective bundle in case \eqref{eq:proj-bundle}, and 
either a projective bundle or a quadric fibration with integral fibers 
in case \eqref{eq:quadric-fibr}.
In both cases, the fibers of $\p$ are linearly embedded in the fibers of $\~\p$.
\end{theorem}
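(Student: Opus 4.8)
By \cref{th:P-fibr-Q-fibr} (applied with $c = 0$ for a projective bundle and $c = 1$, $d_1 = 2$ for a quadric fibration, the index condition $> c$ being automatic in both cases), we already know that $\p$ extends uniquely to a morphism $\~\p \colon X \to Z$; so the content of the theorem is to identify the \emph{structure} of this extension. First I would set up the geometry on the source side: fix the fiberwise embedding $Y \inj Y'$ of $Y$ into a $\P^{n+c}$-bundle $\p' \colon Y' \to Z$ over $Z$, realizing each fiber of $\p$ as a linear $\P^n$ (case \eqref{eq:proj-bundle}) or as a quadric $Q^n \subset \P^{n+1}$ (case \eqref{eq:quadric-fibr}). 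Since $Z$ is smooth, $\p'$ is the projectivization of a vector bundle and carries a relatively very ample line bundle $\cH_{Y'}$; let $\cH = \cH_{Y'}|_Y$, a line bundle on $Y$ that is very ample on every fiber and embeds $Y_z$ linearly (resp.\ as a quadric) in $\P^{n+c}$.

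\textbf{Extending the polarization.} The key step is to extend $\cH$ to a line bundle $\cH_X$ on $X$ which is relatively very ample for $\~\p$ and restricts to the embedding $\cH$ on each fiber of $\p$. Here is where the hypothesis that $\Pic(X)\to\Pic(Y)$ is surjective enters: choose $\cH_X \in \Pic(X)$ with $\cH_X|_Y \cong \cH$. I would then argue that $\cH_X$ is $\~\p$-very ample near $Y$ and in fact on all of $X$. The mechanism: by \cref{th:Pic} the restriction $\iota_* \colon N_1(Y)\to N_1(X)$ is an isomorphism, and the fibers of $\~\p$ over a point $z \in Z$ meet $Y$ (indeed $Y$ is ample, so $Y\cap \~\p^{-1}(z)\ne\emptyset$ once $\dim Y + \dim \~\p^{-1}(z) \ge \dim X$, which holds since $\dim \~\p^{-1}(z) \ge \dim X - \dim Z$ and $\dim Y = \dim X - r > \dim X - (\dim Y - \dim Z)$ forces $r < \dim Y - \dim Z \le$ relative dimension). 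A general fiber $X_z$ of $\~\p$ is then a smooth variety containing the ample subvariety $Y_z = Y \cap X_z$ of codimension $r$ (using \cite[Proposition~4.8]{Lau16} as in the Remark after \cref{th:BdFL}), and $Y_z$ is $\P^n$ or $Q^n$. Now $\cH_X|_{X_z}$ is a line bundle restricting to $\O_{\P^n}(1)$ (resp.\ $\O_{Q^n}(1)$) on the ample subvariety $Y_z$; since $\Pic(X_z)\to \Pic(Y_z)$ is injective for $\dim Y_z \ge 2$ by \cref{th:Pic}, and since $\P^n$ and $Q^n$ ($n\ge 3$) have $\Pic \cong \Z$ generated by $\O(1)$, I would deduce that $\Pic(X_z)\cong\Z$ with ample generator $\cG_z$ satisfying $\cG_z|_{Y_z} = \O_{Y_z}(1)$ and $\cH_X|_{X_z} = \cG_z$ (the small cases $n = 2$ requiring the Lefschetz theorem \cref{th:Lefschetz} to control $H^2$; a quadric surface is excluded by the integral-fibers hypothesis, or handled directly).

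\textbf{Identifying the fiber.} Once $\cH_X|_{X_z}$ is the ample generator of $\Pic(X_z)\cong\Z$ restricting to $\O(1)$ on the ample subvariety $Y_z \cong \P^n$ or $Q^n$, I would invoke the structure theory of varieties with small invariants: $X_z$ is a smooth projective variety of Picard number one containing a codimension-$r$ ample subvariety which is linear (or a quadric), with $r < \dim X_z - (\text{something})$; the Lefschetz--Sommese-type restrictions (via \cref{th:Lefschetz}) force the cohomology of $X_z$ to agree with that of $\P^N$ or $Q^N$ in low degree, and then a Kobayashi--Ochiai / Cho--Miyaoka--Shepherd-Barron style argument identifies $X_z \cong \P^N$ (resp.\ $Q^N$) with $Y_z$ linearly embedded — this is exactly the `relative' analogue of \cite[Theorem~5.8]{BdFL08}, and I expect to import the fiberwise classification essentially verbatim from there. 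Having pinned down every fiber, the relative very ampleness of $\cH_X$ (by semicontinuity, using that it is very ample on all fibers and $\~\p$ is proper) gives a fiberwise embedding $X \inj \P(\~\p_*\cH_X)$ over $Z$ whose fibers are all $\P^N$ or all $Q^N$; shrinking $Z$ and spreading out shows $\~\p$ is globally a projective bundle (resp.\ a quadric fibration with integral fibers, the integrality being inherited from the fibers being irreducible quadrics). The compatibility $\cH_X|_Y = \cH$ is precisely the statement that the fibers of $\p$ sit linearly inside the fibers of $\~\p$.

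\textbf{Main obstacle.} The crux is the middle step: extending the relative polarization $\cH$ from $Y$ to $X$ as a \emph{relatively} very ample bundle and controlling its restriction to every fiber of $\~\p$, not merely the general one. The surjectivity hypothesis $\Pic(X)\to\Pic(Y)$ hands us a candidate $\cH_X$, but one must rule out `jumping' of $\cH_X|_{X_z}$ over special points $z$ — this is where the ampleness of $Y$ in $X$ (forcing $Y_z$ to remain a positive-dimensional ample subvariety of $X_z$ fiberwise, via \cite[Proposition~4.8]{Lau16}) and the injectivity of $\Pic(X_z)\to\Pic(Y_z)$ from \cref{th:Pic} do the essential work, and the low-dimensional fibers ($\dim Y_z \le 2$) will need separate bookkeeping. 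Once every fiber is identified, assembling the global bundle structure is routine descent.
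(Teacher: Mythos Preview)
Your outline tracks the paper's strategy—extend $\p$ via \cref{th:P-fibr-Q-fibr}, lift the relative polarization using the Picard surjectivity hypothesis, identify the fibers of $\~\p$, then assemble the global structure—but it has two genuine gaps in the fiber-identification step.

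\textbf{The quadric-surface case.} Your dismissal of $n=2$ in case~\eqref{eq:quadric-fibr} is wrong: a smooth quadric surface $Q^2 \cong \P^1\times\P^1$ is integral, so the integral-fibers hypothesis does not exclude it. When $n=2$ (which forces $r=1$) one has $\Pic(Y_z)\cong\Z^2$, and your injectivity argument no longer yields $\Pic(X_z)\cong\Z$. The paper deals with this up front by proving that $\p$ has relative Picard number one; the only obstruction is a $\P^1\times\P^1$-bundle with trivial monodromy, which is excluded by applying \cite[Theorem~1.3]{Liu19} to each of the two intermediate $\P^1$-bundles and reaching a contradiction on the general fiber of $\~\p$. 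This step is not optional, and your proposal supplies no substitute.

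\textbf{Special fibers of $\~\p$.} Your plan to control $\cH_X|_{X_z}$ over every $z$ via ampleness of $Y_z\subset X_z$ and injectivity of $\Pic(X_z)\to\Pic(Y_z)$ from \cref{th:Pic} fails because \cref{th:Pic} requires $X_z$ to be \emph{smooth}, and a priori $\~\p$ may well have singular fibers. The paper proceeds differently: it first shows $\~\p$ is equidimensional with integral fibers (using ampleness of $Y$ and that it is l.c.i., as in \cite[Theorem~5.8]{BdFL08}) and hence flat (\cref{l:flat-extension}); it then identifies the \emph{smooth} fibers via a nef-value computation—the adjunction $(K_G + a\,c_1(\cL|_G))\.C = (K_F + a\,c_1(\cH|_F) - c_1(\cN_{F/G}))\.C$ together with ampleness of $\cN_{Y/X}$—and Ionescu's classification \cite{Ion86}, which also throws up a scroll-over-a-curve case that must be excluded by a separate monodromy argument; finally it transports the conclusion to \emph{all} fibers by semicontinuity of Fujita's $\Delta$-genus \cite{Fuj75}. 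Your Kobayashi--Ochiai suggestion is a legitimate alternative to the Ionescu step for the smooth fibers (the same adjunction inequality gives the needed index bound), but it does nothing for the singular ones; you still need the equidimensionality/integrality/flatness groundwork and a semicontinuity device to finish.
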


\begin{proof}
By \cref{th:P-fibr-Q-fibr}, $\p$ extends uniquely to a morphism $\~\p \colon X \to Z$. 

We claim that $\p$ has relative Picard number 1.
Otherwise $\p$ is necessarily a $\P^1 \times \P^1$-bundle with trivial monodromy
on the cohomology of the fibers, given by the contraction
of a 2-dimensional face of the Mori cone $\CNE(Y)$. 
In this case, by contracting the 
two extremal rays of this face independently, we obtain two 
$\P^1$-bundles $\s_i \colon Y \to W_i$, $i = 1,2$, where each $W_i$ is a $\P^1$-bundle over $Z$. 
Since in this case $\p$ has relative dimension 2, we have $r=1$ and hence $Y$ is an ample divisor on $X$. 
We can therefore apply \cite[Theorem~1.3]{Liu19}. 
The surjectivity of $\Pic(X) \to \Pic(Y)$ implies that
the cases (iii) and (iv) of the quoted theorem cannot occur. 
By the remaining cases (i) and (ii), we see that
both $\P^1$-bundles $\s_i$ extend to $\P^2$-bundles $\~\s_i\colon X \to W_i$. 
Restricting to a general fiber $G$ of $\~\p$, which is 3-dimensional, this gives two distinct
$\P^2$-bundle structures $G \to \P^1$, which is clearly impossible.

We see by \cite[\href{https://stacks.math.columbia.edu/tag/02K4}{Lemma~02K4}]{stacks-project}
that $Z$ is smooth, since $Y$ is smooth and the fibers of $\p$ are reduced.
Therefore there exists a line bundle $\cH$ on $Y$ inducing a linear polarization on the fibers of $\p$. 
By our hypothesis on the Picard groups, 
we can pick a line bundle $\cL$ on $X$ such that $\cL|_Y \cong \cH$. 
The same condition on the Picard groups implies that $\~\p$, like $\p$, has relative
Picard number 1, and therefore
$\cL$ is relatively ample. After twisting by the pull-back of a sufficiently ample
line bundle on $Z$, we can assume that $\cL$ is an ample line bundle. 

Arguing as in the proof of \cite[Theorem~5.8]{BdFL08}, we see that
$\~\p$ is equidimensional with integral fibers. 
Since the setting here is slightly different, we sketch the argument. 
Note that $\p$ is equidimensional, say of relative dimension $n$, and the general fiber of $\~\p$
has dimension $n+r$. 
If $G_i$ is any irreducible component of $G$, then $Y \cap G_i \ne \emptyset$ 
by the ampleness of $Y$ in $X$, and since $Y$ is locally complete intersection of
codimension $r$ in $X$, it follows that $\dim(Y \cap G_i) \ge \dim G_i - r$. 
Therefore $G_i$ has dimension $n+r$ and $F \subset G_i$. 
Note, in particular, that $G$ is regularly embedded in $X$ since
$Z$ is smooth and $\codim(G,X) = \dim Z$, and
therefore it has no embedded components by \cite[Theorem 17.6]{Mat89}. 
Since $\O_{X,F}$ is a regular local ring with a regular sequence locally defining $G$ 
forming part of a regular system of parameters, $\O_{G,F}$ is a regular local ring. 
As every irreducible component of $G$ contains $F$, it follows that $G$ is integral.

Let $m = n+r$ denote the dimension of the fibers of $\~\p$. 
Let $G$ be a smooth fiber of $\~\p$, let $F \subset G$ be the corresponding fiber of $\p$, 
and let $C \subset F$ be a line. By adjunction formula, we have
\[
(K_G + a \,c_1(\cL|_G))\.C = 
(K_F + a \,c_1(\cH|_F) - c_1(\cN_{F/G}))\.C
\]
for any integer $a$. 
Since $\cN_{F/G} = \cN_{Y/X}|_F$ is an ample vector bundle of rank $r$, we see 
that the nef value of $(G,\cL|_G)$ is at least $m + 1$ in case \eqref{eq:proj-bundle}, 
and at least $m$ in case \eqref{eq:quadric-fibr}.
We can therefore apply the main result of \cite{Ion86} (see also \cite{Fuj92}). 
In case \eqref{eq:proj-bundle}, this implies that $(G,\cL|_G) \cong (\P^m,\O_{\P^m}(1))$. 
In case \eqref{eq:quadric-fibr}, we see that $(G,\cL|_G)$ can either be $(\P^m,\O_{\P^m}(1))$,
$(Q,\O_{Q}(1))$ where $Q \subset \P^{m+1}$ is a smooth quadric hypersurface, or a scroll over a curve. 

The last case can be excluded, as follows. 
Assume that $G$ is a scroll over $\P^1$. First, note that $n \ge 2$, and since $F$ is ample in $G$, the map
$\Pic(G) \to \Pic(F)$ is injective by \cref{th:Pic}. Therefore $F \cong \P^1 \times \P^1$. 
Since $\p$ has relative Picard number 1, $Z$ cannot be a point. 
Let $B \subset Z$ be a general complete intersection curve, and let $W = \p^{-1}(B)$.
If $\p|_W \colon W \to B$ is a smooth fibration, then, arguing as at the beginning of the
proof, we see that the monodromy action on $N_1(F)$ must swap the two rulings in
the fibers of $\p|_W$. We claim that the same happens even if $\p$ has some singular fibers. 
Suppose this is not the case. Let $C \subset F$ be a line. 
By taking a general one-parameter deformation of $C$ in $W$, we construct a divisor $D$ on $W$ which
is Cartier since, by Bertini, we can assume that $W$ is smooth. 
If the monodromy acts trivially on $N_1(F)$, then $D$ intersects $F$ into a finite
union of lines in the same ruling of $C$. This implies that $D\.C = 0$, 
and hence $D$ cannot be relatively ample (or antiample) over $B$. 
Since on the other hand $D$ is not numerically trivial over $B$, 
as it intersect positively any line in the other ruling of $F$,
this contradicts the fact that $\p|_W$, having singular fibers, has relative Picard number 1.
Therefore the monodromy action on $N_1(F)$ cannot be trivial and must
swap the two rulings. 
Now, the map $N_1(F) \to N_1(G)$ sends one of the extremal rays of the Mori cone $\CNE(F)$ to 
the extremal ray $R$ of $\CNE(G)$ defining the projective bundle fibration $G \to \P^1$.
The contradiction follows by observing that the monodromy action on $N_1(G)$ must stabilize the ray $R$
since, for dimensional reasons, $G$ cannot have two distinct fibrations to $\P^1$. 
Therefore this case cannot occur, hence
we conclude that $(G,\cL|_G)$ can only be either $(\P^m,\O_{\P^m}(1))$ or $(Q,\O_{Q}(1))$. 

Note that $\~\p$ is flat, see \cref{l:flat-extension} below.
To finish the proof, we apply semi-continuity of
the $\Delta$-genus along the fibers of $\~\p$ \cite[Theorem~5.2]{Fuj75}
and the classification of polarized varieties with $\Delta$-genus zero \cite[Theorems~2.1 and~2.2]{Fuj75},
as in the proof of \cite[Theorem~5.8]{BdFL08}.
This allows us to conclude that all fibers of $\~\p$ are projective spaces or quadric hypersurfaces, 
depending of the situation. 
The sheaf $\~\pi_*\cL$ is locally free on $Z$, the surjection $\~\pi^*\~\pi_*\cL\to \cL$ 
gives the desired linear embedding $X\inj \P(\~\pi_*\cL)$ that gives $X$ the projective bundle 
or quadric fibration structure, and the surjection $\~\pi_*\cL \to \p_*\cH$
gives the fiberwise linear embedding of $Y$ into $X$. 
\end{proof}

\begin{lemma}
\label{l:flat-extension}
Let $X$ be a complex projective Cohen--Macaulay variety and $Y \subset X$ a regularly embedded 
ample subvariety. Let $\f \colon X \to Z$ be a morphism with $Z$ smooth. 
If $\f|_Y \colon Y \to Z$ is flat, then so is $\f$. 
\end{lemma}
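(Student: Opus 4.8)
The statement is a flatness criterion: $\f\colon X \to Z$ with $Z$ smooth, $X$ Cohen--Macaulay, and $\f|_Y$ flat implies $\f$ flat. Since $Z$ is smooth (hence regular), the classical ``miracle flatness'' criterion applies: a morphism from a Cohen--Macaulay scheme to a regular scheme is flat if and only if it is equidimensional, i.e.\ all fibers have the same dimension $\dim X - \dim Z$. So the entire problem reduces to showing that every fiber $F = \f^{-1}(z)$ of $\f$ has dimension exactly $\dim X - \dim Z$. Upper semicontinuity of fiber dimension already gives $\dim F \ge \dim X - \dim Z$ for every $z$, so the only thing to rule out is a fiber that is too big.

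First I would reduce to a single fiber: fix $z \in Z$ and let $F = \f^{-1}(z)$. I want $\dim F \le \dim X - \dim Z$. The key geometric input is the ampleness of $Y$ in $X$, combined with the hypothesis that $\f|_Y$ is flat, hence $\f|_Y$ is equidimensional with all fibers of dimension $\dim Y - \dim Z$ (using $Z$ smooth, $Y$ smooth, and ordinary flatness theory — or simply since $\f|_Y$ is flat by hypothesis). Now the crucial point: for any irreducible component $F_i$ of $F$, ampleness of $Y$ forces $F_i \cap Y \ne \emptyset$ provided $\dim F_i$ is large enough, and since $Y$ is \emph{regularly} embedded of codimension $r = \operatorname{codim}(Y,X)$, we get the dimension estimate
\[
\dim(F_i \cap Y) \ge \dim F_i - r.
\]
But $F_i \cap Y$ is contained in the fiber $(\f|_Y)^{-1}(z)$, which by flatness of $\f|_Y$ has dimension exactly $\dim Y - \dim Z = \dim X - r - \dim Z$. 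Combining,
\[
\dim F_i - r \le \dim(F_i \cap Y) \le \dim X - r - \dim Z,
\]
hence $\dim F_i \le \dim X - \dim Z$, as desired. This uses exactly the same style of argument already invoked in the proof of \cref{cor:scroll-quadric} for the equidimensionality of $\~\p$.

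The one subtlety to handle carefully is the claim $F_i \cap Y \ne \emptyset$: ampleness of the subvariety $Y$ guarantees that $Y$ meets any closed subvariety $Z'$ of $X$ with $\dim Z' \ge r$ (this is part of the basic theory of ample subschemes in \cite{Ott12}, analogous to the fact that an ample divisor meets every positive-dimensional subvariety). So I must first observe that if $\dim F_i < r$ there is nothing to prove (then trivially $\dim F_i \le \dim X - \dim Z$, since $\dim X - \dim Z \ge \dim Y - \dim Z \ge 0$... actually I need to be slightly more careful: $\dim F_i < r$ would need $\dim X - \dim Z \ge \dim F_i$, which is not automatic, so I should instead note that if $\dim F_i \le \dim X - \dim Z$ we are already done, and otherwise $\dim F_i > \dim X - \dim Z \ge \dim Y - \dim Z \ge 0$, and in particular $\dim F_i \ge \dim X - \dim Z + 1$; I then want $\dim F_i \ge r$ to apply ampleness, i.e.\ I want $\dim X - \dim Z + 1 \ge r$, equivalently $\dim Y \ge \dim Z - 1$, which may fail). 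The clean fix: simply observe that every irreducible component of a fiber of a surjective morphism $X \to Z$ has dimension $\ge \dim X - \dim Z$ by the fiber-dimension theorem, so $\dim F_i \ge \dim X - \dim Z = \dim Y - \dim Z + r \ge r$ (using $\dim Y \ge \dim Z$, which holds since $\f|_Y$ is surjective — it is flat, hence open, onto the smooth connected $Z$). Thus $\dim F_i \ge r$ always, ampleness applies, and the estimate above closes the argument.

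\medskip

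\noindent The main obstacle, such as it is, is bookkeeping: making sure the regular-embedding hypothesis is actually used (it is what gives the inequality $\dim(F_i\cap Y) \ge \dim F_i - r$ with no correction terms) and that the fiber of $\f|_Y$ over $z$ really does have dimension $\dim Y - \dim Z$ rather than merely $\ge$ — this is where flatness of $\f|_Y$ together with smoothness (irreducibility) of $Z$ enters. Once those two facts are in place, the inequality chain is immediate and miracle flatness finishes the proof. No deformation theory or the machinery of the earlier sections is needed here; this lemma is purely commutative-algebra plus the intersection-theoretic property of ample subvarieties.
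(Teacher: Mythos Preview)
Your proposal is correct and follows essentially the same route as the paper's proof: reduce to equidimensionality via miracle flatness (the paper cites \cite[\href{https://stacks.math.columbia.edu/tag/00R4}{Lemma~00R4}]{stacks-project}), then use ampleness of $Y$ to force every irreducible component $F_i$ of a fiber to meet $Y$, and the regular embedding to bound $\dim(F_i\cap Y)\ge \dim F_i - r$, which together with the flatness of $\f|_Y$ pins down $\dim F_i$. Your additional care in verifying $\dim F_i \ge r$ (so that ampleness guarantees $F_i\cap Y\ne\emptyset$) via surjectivity of $\f|_Y$ is a detail the paper leaves implicit.
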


\begin{proof}
By \cite[\href{https://stacks.math.columbia.edu/tag/00R4}{Lemma 00R4}]{stacks-project}, 
it suffices to show that $\f$ is equidimensional. 
The flatness of $\f|_Y$ implies that the map is surjective and equidimensional. 
Each irreducible component of any fiber of $\f$ must intersect $Y$, by ampleness of $Y$,
and the fact that $Y$ is regularly embedded in $X$ implies that such intersection 
will be of codimension $\le \codim(Y,X)$ in the given component.
This forces $\f$ to be equidimensional. 
\end{proof}


\begin{bibdiv}
\begin{biblist}

\bib{ANO06}{article}{
   author={Andreatta, Marco},
   author={Novelli, Carla},
   author={Occhetta, Gianluca},
   title={Connections between the geometry of a projective variety and of an
   ample section},
   journal={Math. Nachr.},
   volume={279},
   date={2006},
   number={13-14},
   pages={1387--1395},
}

\bib{AO99}{article}{
   author={Andreatta, Marco},
   author={Occhetta, Gianluca},
   title={Ample vector bundles with sections vanishing on special varieties},
   journal={Internat. J. Math.},
   volume={10},
   date={1999},
   number={6},
   pages={677--696},
}

\bib{BdFL08}{article}{
   author={Beltrametti, Mauro C.},
   author={de Fernex, Tommaso},
   author={Lanteri, Antonio},
   title={Ample subvarieties and rationally connected fibrations},
   journal={Math. Ann.},
   volume={341},
   date={2008},
   number={4},
   pages={897--926},
}

\bib{BI09}{article}{
   author={Beltrametti, Mauro C.},
   author={Ionescu, Paltin},
   title={A view on extending morphisms from ample divisors},
   conference={
      title={Interactions of classical and numerical algebraic geometry},
   },
   book={
      series={Contemp. Math.},
      volume={496},
      publisher={Amer. Math. Soc., Providence, RI},
   },
   date={2009},
   pages={71--110},
}

\bib{BS95}{book}{
   author={Beltrametti, Mauro C.},
   author={Sommese, Andrew J.},
   title={The adjunction theory of complex projective varieties},
   series={De Gruyter Expositions in Mathematics},
   volume={16},
   publisher={Walter de Gruyter \& Co., Berlin},
   date={1995},
}

\bib{dF00}{article}{
   author={de Fernex, Tommaso},
   title={Ample vector bundles and intrinsic quadric fibrations over
   irrational curves},
   journal={Matematiche (Catania)},
   volume={55},
   date={2000},
   number={1},
}
		
\bib{dFL99}{article}{
   author={de Fernex, Tommaso},
   author={Lanteri, Antonio},
   title={Ample vector bundles and del Pezzo manifolds},
   journal={Kodai Math. J.},
   volume={22},
   date={1999},
   number={1},
   pages={83--98},
}

\bib{dFL}{article}{
   author={de Fernex, Tommaso},
   author={Lau, Chung Ching},
   title={Grothendieck--Lefschetz for ample subvarieties},
   note={Preprint, {\tt arXiv:1911.10385v2}},
   date={2020},
}

\bib{Fuj75}{article}{
   author={Fujita, Takao},
   title={On the structure of polarized varieties with $\Delta $-genera
   zero},
   journal={J. Fac. Sci. Univ. Tokyo Sect. IA Math.},
   volume={22},
   date={1975},
   pages={103--115},
}

\bib{Fuj92}{article}{
   author={Fujita, Takao},
   title={On adjoint bundles of ample vector bundles},
   conference={
      title={Complex algebraic varieties},
      address={Bayreuth},
      date={1990},
   },
   book={
      series={Lecture Notes in Math.},
      volume={1507},
      publisher={Springer, Berlin},
   },
   date={1992},
   pages={105--112},
}

\bib{Har77}{book}{
   author={Hartshorne, Robin},
   title={Algebraic geometry},
   note={Graduate Texts in Mathematics, No. 52},
   publisher={Springer-Verlag, New York-Heidelberg},
   date={1977},
}

\bib{Ion86}{article}{
   author={Ionescu, Paltin},
   title={Generalized adjunction and applications},
   journal={Math. Proc. Cambridge Philos. Soc.},
   volume={99},
   date={1986},
   number={3},
   pages={457--472},
}

\bib{Kol96}{book}{
   author={Koll\'{a}r, J\'{a}nos},
   title={Rational curves on algebraic varieties},
   series={Ergebnisse der Mathematik und ihrer Grenzgebiete. 3. Folge. A
   Series of Modern Surveys in Mathematics [Results in Mathematics and
   Related Areas. 3rd Series. A Series of Modern Surveys in Mathematics]},
   volume={32},
   publisher={Springer-Verlag, Berlin},
   date={1996},
}

\bib{LM95}{article}{
   author={Lanteri, Antonio},
   author={Maeda, Hidetoshi},
   title={Ample vector bundles with sections vanishing on projective spaces
   or quadrics},
   journal={Internat. J. Math.},
   volume={6},
   date={1995},
   number={4},
   pages={587--600},
}

\bib{LM96}{article}{
   author={Lanteri, Antonio},
   author={Maeda, Hidetoshi},
   title={Ample vector bundle characterizations of projective bundles and
   quadric fibrations over curves},
   conference={
      title={Higher-dimensional complex varieties},
      address={Trento},
      date={1994},
   },
   book={
      publisher={de Gruyter, Berlin},
   },
   date={1996},
   pages={247--259},
}

\bib{LM01}{article}{
   author={Lanteri, Antonio},
   author={Maeda, Hidetoshi},
   title={Special varieties in adjunction theory and ample vector bundles},
   journal={Math. Proc. Cambridge Philos. Soc.},
   volume={130},
   date={2001},
   number={1},
   pages={61--75},
}

\bib{Lau16}{article}{
   author={Lau, Chung Ching},
   title={On nef subvarieties},
   journal={Adv. Math.},
   volume={353},
   date={2019},
   pages={396--430},
}
 
\bib{Lit17}{article}{
   author={Litt, Daniel},
   title={Manifolds containing an ample $\Bbb{P}^1$-bundle},
   journal={Manuscripta Math.},
   volume={152},
   date={2017},
   number={3-4},
   pages={533--537},
}

\bib{Liu19}{article}{
   author={Liu, Jie},
   title={Characterization of projective spaces and $\P^r$-bundles as ample divisors},
   journal={Nagoya Math. J.},
   volume={233},
   date={2019},
   pages={155--169},
}

\bib{Mat89}{book}{
   author={Matsumura, Hideyuki},
   title={Commutative ring theory},
   series={Cambridge Studies in Advanced Mathematics},
   volume={8},
   edition={2},
   note={Translated from the Japanese by M. Reid},
   publisher={Cambridge University Press, Cambridge},
   date={1989},
}

\bib{Occ06}{article}{
   author={Occhetta, Gianluca},
   title={Extending rationally connected fibrations},
   journal={Forum Math.},
   volume={18},
   date={2006},
   number={5},
   pages={853--867},
}

\bib{Ott12}{article}{
   author={Ottem, John Christian},
   title={Ample subvarieties and $q$-ample divisors},
   journal={Adv. Math.},
   volume={229},
   date={2012},
   number={5},
   pages={2868--2887},
}

\bib{Som76}{article}{
   author={Sommese, Andrew John},
   title={On manifolds that cannot be ample divisors},
   journal={Math. Ann.},
   volume={221},
   date={1976},
   number={1},
   pages={55--72},
}

\bib{stacks-project}{article}{
   author       = {{Stacks project authors}, The},
   title        = {The Stacks project},
   note = {\url{https://stacks.math.columbia.edu}},
   year         = {2020}
}  
 
\bib{Ste68}{article}{
   author={Stein, Karl},
   title={Fortsetzung holomorpher Korrespondenzen},
   language={German},
   journal={Invent. Math.},
   volume={6},
   date={1968},
   pages={78--90},
}

\bib{Tot13}{article}{
   author={Totaro, Burt},
   title={Line bundles with partially vanishing cohomology},
   journal={J. Eur. Math. Soc. (JEMS)},
   volume={15},
   date={2013},
   number={3},
   pages={731--754},
}

\end{biblist}
\end{bibdiv}

\end{document}